\documentclass[12pt,letterpaper,oneside]{amsart}

 \usepackage{amsmath,amssymb,amsthm}
 \usepackage{amscd}
 \usepackage{geometry}
 \usepackage{graphicx,epstopdf}
 \usepackage{color}
 \usepackage{enumerate}
 \usepackage{xspace}
 \usepackage{tipa}
 \usepackage{stmaryrd}
 \usepackage{labelfig}
 \usepackage{epsfig}
 \usepackage{pinlabel}
 \usepackage[all]{xy}



 \usepackage{hyperref}

\setlength{\parindent}{0pt}
\setlength{\parskip}{6pt}



 \newcommand{\calA}{\mathcal{A}}

 \newcommand{\calH}{\mathcal{H}}








 \newtheorem{theorem}{Theorem}[section]
 \newtheorem{proof of the main theorem}[theorem]{Proof of the Main Theorem}
 \newtheorem{proposition}[theorem]{Proposition}
 \newtheorem{observation}[theorem]{Observation}

 \newtheorem{lemma}[theorem]{Lemma}
 
 \newtheorem*{conjecture*}{Conjecture}

 \theoremstyle{definition}
 \newtheorem{definition}[theorem]{Definition}

 \newtheorem*{claim*}{Claim}

 \newtheorem*{question*}{Question}
 \newtheorem*{answer*}{Answer}
 \newtheorem*{application*}{Application}
 \newtheorem*{ideas*}{ideas}
 \theoremstyle{remark}
 \newtheorem{remark}[theorem]{Remark}
 \newtheorem*{remark*}{Remark}



 \DeclareMathOperator{\Mod}{Mod}



 


 
 

 
 
 
 

 

 
 
 
 
 
 







 \newcommand{\param}{{\mathchoice{\mkern1mu\mbox{\raise2.2pt\hbox{$
 \centerdot$}}
 \mkern1mu}{\mkern1mu\mbox{\raise2.2pt\hbox{$\centerdot$}}\mkern1mu}{
 \mkern1.5mu\centerdot\mkern1.5mu}{\mkern1.5mu\centerdot\mkern1.5mu}}}
\renewcommand{\setminus}{{\smallsetminus}}


 \newcommand{\Z}{{\mathbb{Z}}}

\newcommand{\p}{{(\Gamma, \mathcal{A})}}

\newcommand{\A}{\mathcal{A}}
\newcommand{\hX}{{\mathcal{H}(X)}}
\newcommand{\hS}{{\mathcal{H}(\Sigma)}}

\newcommand{\pS}{{\mathcal{P}(\Sigma)}}

\begin{document}


\title {The coarse geometry of hexagon decomposition graphs}

 \author{Funda G\"ultepe}
 \thanks{The first author was partially supported by NSF grant DMS-2137611. Both authors were partially supported by ANR/FNR project SoS, INTER/ANR/16/11554412/SoS, ANR-17-CE40-0033.}
 \address{Department of Mathematics and Statistics \\
 University of Toledo\\
 Toledo, OH, 43606}
\email{funda.gultepe@utoledo.edu}
\urladdr{http://www.math.utoledo.edu/~fgultepe/}
\author {Hugo Parlier}

\address{Department of Mathematics\\
University of Luxembourg\\
Esch-sur-Alzette, Luxembourg}
\email{hugo.parlier@uni.lu}
\urladdr{http://math.uni.lu/parlier/}

\begin{abstract}
We define and study graphs associated to hexagon decompositions of surfaces by curves and arcs. One of the variants is shown to be quasi-isometric to the pants graph, whereas the other variant is quasi-isometric to (a Cayley graph of) the mapping class group.
\end{abstract}

\maketitle
\section{Introduction}
Simplicial complexes related to curves and arcs have been used with considerable success in the study of surfaces and their deformation spaces. In particular, they have proved to be efficient tools for the study of the large scale geometry of Teichm\"uller spaces and mapping class groups. For instance, curve graphs are rough models for the electrified Teichm\"uller metric, as is the marking graph for mapping class groups \cite{Masur-MinskyI,Masur-MinskyII}, and the pants graph for the Weil-Petersson metric \cite{Brock}. Flip-graphs, which originally appeared in topics closer to computational or combinatorial geometry, also provide quasi-models for mapping class groups, but, like arc graphs, they require marked points to serve as basepoints for arcs. This is also true of other similar graphs, such as the polygonalisation graph introduced recently \cite{BDT}.

Here we introduce graphs that use both curves and arcs, but unlike standard arc graphs, do not require marked points. Instead, homotopy classes of arcs go between curves (with endpoints allowed to glide on the curves). In particular, these graphs work for surfaces with or without marked points, and so work for closed surfaces in particular, while mimicking some of the nice properties of flip-graphs. Roughly speaking, the vertices of the graphs are collections of curves and arcs that decompose the surface into hexagons. Edge relations come from either flipping an arc, or from adding or removing a curve. We have two versions, depending on whether we put weights on the curves.

On an orientable, finite-type and of negative Euler characteristic surface $\Sigma$, our first graph $\hS$ is defined purely topologically (see Section \ref{sec:prelim} for a precise definition) and hence we call it the \emph{topological hexagon decomposition graph}. We view it as a metric space by giving each edge unit length. Our main result is that this graph is quasi-isometric to the standard pants graph $\mathcal{P}(\Sigma)$.

\begin{theorem}\label{thm:intropantsqi}
For finite-type orientable surfaces, the topological hexagon decomposition graph $\hS$ is quasi-isometric to the pants graph $\mathcal{P}(\Sigma)$.
\end{theorem}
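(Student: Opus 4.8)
The plan is to construct coarse maps $\Phi \colon \hS \to \pS$ and $\Psi \colon \pS \to \hS$ and prove they are coarsely Lipschitz and coarse inverses, so that either one is the desired quasi-isometry. Given a hexagon decomposition $H$, let $\Gamma(H)$ denote its underlying multicurve (the curves appearing in $H$, forgetting the arcs). The arcs of $H$ cut each complementary component of $\Gamma(H)$ into hexagons, and surgering these arcs along subarcs of $\Gamma(H)$ produces, in a controlled way, a disjoint curve system completing $\Gamma(H)$ to a pants decomposition $\Phi(H)$; the choices made in the surgery change $\Phi(H)$ by at most a bounded number (depending only on $\Sigma$) of elementary pants moves, so $\Phi$ is well defined up to bounded error. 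Conversely, from a pants decomposition $P$ one chooses seam arcs realizing $P$ as a hexagon decomposition $\Psi(P)$; since arcs with endpoints gliding on curves are rigid (Section~\ref{sec:prelim}), any two such choices differ by boundedly many hexagon moves, so $\Psi$ is also well defined up to bounded error.

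It is straightforward that $\Phi$ is coarsely Lipschitz. An arc flip fixes $\Gamma(H)$ and replaces one arc by another meeting it a bounded number of times, hence changes the surgered completion by boundedly many elementary pants moves; a curve addition or removal changes $\Gamma(H)$ by a single curve and, by the form of the move, changes the arc system in a bounded way, hence again moves $\Phi(H)$ boundedly. So adjacent vertices of $\hS$ have $\Phi$-images at uniformly bounded distance in $\pS$.

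The crux is that $\Psi$ is coarsely Lipschitz, i.e.\ that a single elementary pants move, replacing a curve $\alpha \in P$ by a curve $\alpha'$ meeting it minimally, is realized by a uniformly bounded number of hexagon moves. Such a move is supported in the complexity-one subsurface $Q$ (a four-holed sphere or one-holed torus) cut off by the remaining curves of $P$, which stay frozen together with their seams. First, using boundedly many arc flips inside $Q$, normalize $\Psi(P)$ so that $\alpha$ meets the arcs in the minimal pattern for which the curve-removal move is available, and remove $\alpha$; this leaves a hexagon decomposition of $Q$ by arcs alone. Since a fixed complexity-one surface carries only finitely many hexagon decompositions by arcs with gliding endpoints, up to isotopy, this arc-only decomposition lies within bounded $\hS$-distance of the one produced by the symmetric procedure starting from $\alpha'$; boundedly many flips then make $\alpha'$ addable, and a bounded normalization of seams produces $\Psi(P')$. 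As $\hS$ and the pants graph are connected, it follows that $\Psi$ is coarsely Lipschitz. I expect this to be the main obstacle: one must show, uniformly in the configuration, that the curve-removal (and later curve-addition) move becomes available after only boundedly many flips, which is essentially a normal-form analysis of hexagon decompositions of small subsurfaces.

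Finally, $\Phi$ and $\Psi$ are coarse inverses. For every $P$ the multicurve $\Gamma(\Psi(P))$ equals $P$, which is already a pants decomposition, so $\Phi(\Psi(P)) = P$ exactly. For every $H$, adding the boundedly many completion curves of $\Phi(H)$ to $H$ one at a time, each after a bounded normalization, reaches a hexagon decomposition whose underlying multicurve is $\Phi(H)$; a further bounded adjustment of its seams yields $\Psi(\Phi(H))$, so $d_{\hS}(H, \Psi(\Phi(H)))$ is bounded. Combined with the two coarse-Lipschitz estimates, this shows that $\Psi$ (equivalently $\Phi$) is a quasi-isometry, proving Theorem~\ref{thm:intropantsqi}.
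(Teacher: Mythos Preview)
Your argument for the direction you call the crux rests on a false claim. You assert that a complexity-one subsurface $Q$ (a four-holed sphere or one-holed torus) carries only finitely many hexagon decompositions by arcs with gliding endpoints, up to isotopy. This is not true: the flip graph of such a $Q$ is infinite. On a four-holed sphere, for instance, Dehn twisting a fixed maximal arc system along any interior curve yields infinitely many pairwise non-isotopic hexagon decompositions. What \emph{is} finite is the modular flip graph (the quotient by $\Mod(Q)$), which is what the paper's constant $D$ measures; but two decompositions in the same $\Mod(Q)$-orbit can be arbitrarily far apart in $\hS$. Compounding this, curve removal is genuinely non-unique: infinitely many hexagon decompositions of $\Sigma\setminus(\Gamma\setminus\alpha)$ yield the same $(\Gamma,\calA)$ upon adding $\alpha$ (they differ by Dehn twists along $\alpha$), so ``remove $\alpha$'' does not pick out a well-defined vertex, and your finiteness step cannot bound the distance between the two removal outcomes.

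You have also inverted the difficulty. The paper treats your $\Psi$ (its $\phi\colon\pS\to\hS$) as the easy direction: it simply exhibits, by explicit pictures on the four-holed sphere and one-holed torus, a bounded sequence of hexagon moves realizing each elementary pants move, with no finiteness argument needed. The direction the paper regards as delicate is your $\Phi$ (its $\psi\colon\hS\to\pS$), which you call straightforward. Your assertion that a flip ``changes the surgered completion by boundedly many elementary pants moves'' is precisely what requires proof, and your surgery description of $\Phi$ is too vague to carry it. The paper instead completes $\Gamma$ to a pants decomposition by alternately performing at most $D$ flips and then adding a compatible curve; it tracks the intersection of each added curve with the \emph{original} arc system $\calA_0$ through these steps, obtaining a uniform bound $I_m$. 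Since a multicurve disjoint from $\Gamma$ is determined by its intersection vector with $\calA_0$, bounded intersection forces only finitely many possible completions, hence bounded pants distance between any two choices of $\psi(H)$ and between $\psi(H)$ and $\psi(H')$ for adjacent $H,H'$. That intersection-tracking is the missing ingredient in your outline.
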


By Brock's results on the pants graph \cite{Brock}, this means that $\hS$ is also a coarse model for the Weil-Petersson metric.

Our second type of graph is an augmented version of the above graph, where vertices are the same but with a collection of weights on each curve. Edges come from elementary moves as for $\hS$, but there is the additional consideration of how to determine the weight of an added curve. There would be multiple ways to do this, but we choose to associate weights by measuring lengths with a fixed hyperbolic surface $X$. We call the resulting graph the \emph{geometric hexagon decomposition graph} and denote it $\hX$. In contrast to $\hS$, vertices in $\hX$ are of finite valency (Theorem \ref{thm:valency}), and there is a natural (coarse) action of the mapping class group $\Mod(X)$ (which is also $\Mod(\Sigma)$) on it. In contrast to the action of $\Mod(\Sigma)$ on $\hS$, stabilizers are now finite, and hence we obtain:

\begin{theorem}\label{thm:intromcgqi}
For finite-type orientable surfaces, the geometric hexagon decomposition graph $\hX$ is quasi-isometric to the mapping class group $\Mod(X)$.
\end{theorem}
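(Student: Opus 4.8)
The plan is to obtain the statement from the Milnor--\v{S}varc lemma, in the form valid for coarse actions: if a group $G$ admits a coarse action by uniform quasi-isometries on a connected graph $Y$, and this action is metrically proper and cocompact, then $G$ is finitely generated and the orbit map $g \mapsto g\cdot y_0$ is a quasi-isometry from $G$, with any word metric, onto $Y$. We apply this with $G = \Mod(\Sigma) = \Mod(X)$, with $Y = \hX$, and with the natural coarse action described above; so the task is to check that $\hX$ is connected and that the coarse action is metrically proper and cocompact.

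Connectivity of $\hX$ reduces to connectivity of the topological graph $\hS$ (which is a byproduct of the proof of Theorem~\ref{thm:intropantsqi}): two vertices of $\hX$ project under the weight-forgetting map $V(\hX) \to V(\hS)$ to topological hexagon decompositions joined by a finite sequence of elementary moves in $\hS$; each such move lifts to a move (or short path) in $\hX$ once the weights are carried along, and the weight moves connect any two admissible weightings lying over a common topological decomposition. For metric properness we use the two features of $\hX$ highlighted in the introduction. By Theorem~\ref{thm:valency} the graph $\hX$ is locally finite, so each ball $B_R(v)$ about a vertex $v$ is finite; and $\{\phi \in \Mod(\Sigma) : d_{\hX}(v,\phi\cdot v)\le R\}$ is contained in the union, over the finitely many $w \in B_R(v)$ (up to the bounded ambiguity of the coarse action), of the sets $\{\phi : \phi\cdot v = w\}$, each of which is empty or a coset of the stabilizer of $v$. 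Since that stabilizer is finite, the set is finite. (If one prefers to avoid the bounded ambiguity altogether, one first replaces $\hX$ by an equivariantly quasi-isometric graph carrying a genuine $\Mod(\Sigma)$-action, and runs the same count there.)

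Cocompactness is the heart of the argument. There are only finitely many $\Mod(\Sigma)$-orbits of topological hexagon decompositions: such a decomposition cuts $\Sigma$ into a number of hexagons determined by $\chi(\Sigma)$, there are finitely many combinatorial patterns for doing so, and a change-of-coordinates argument shows that any two decompositions realizing the same pattern differ by a mapping class. Hence the image of the forgetful map $V(\hX) \to V(\hS)$ meets finitely many $\Mod(\Sigma)$-orbits of vertices; choosing one vertex of $\hX$ in the fiber over each orbit representative yields a finite set $F \subseteq V(\hX)$. It remains to see that $\Mod(\Sigma)\cdot F$ is coarsely dense, which comes down to two points: (i) every vertex of $\hX$ lies within bounded $\hX$-distance of a vertex whose weights are the ones the construction reads off from $X$ for its underlying topological decomposition; and (ii) the coarse action moves such a ``canonically weighted'' vertex over a decomposition $D$ to within bounded distance of the canonically weighted vertex over $\phi D$ --- which is exactly the sense in which the action fails to be genuine, namely that $\phi$ does not preserve the $X$-geometry but distorts the assigned weights only boundedly.

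The main obstacle is making (i) and (ii) quantitative: controlling, uniformly over mapping classes, the number of weight (and flip, and add/remove) moves needed to reabsorb the geometric distortion introduced by $\phi$, or equivalently checking that each generator of $\Mod(\Sigma)$ displaces a fixed basepoint of $\hX$ by a bounded amount. This is the step that genuinely uses how weights of newly added curves are assigned via the fixed hyperbolic surface $X$ and how the weight moves are set up; granting it, the remaining inputs --- local finiteness (Theorem~\ref{thm:valency}), finiteness of stabilizers, finiteness of the $\Mod(\Sigma)$-orbits of topological hexagon decompositions, and connectivity inherited from $\hS$ --- assemble precisely into the hypotheses of the Milnor--\v{S}varc lemma, giving the quasi-isometry $\Mod(X) \simeq \hX$ via the orbit map, and with it a quasi-isometry onto a Cayley graph of $\Mod(X)$.
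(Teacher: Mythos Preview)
Your overall strategy matches the paper's exactly: apply a Milnor--\v{S}varc lemma for quasi-actions (the paper quotes this as Lemma~\ref{lem:MS}), after checking that the action is by uniform quasi-isometries, proper, and cocompact. Your properness argument via local finiteness (Theorem~\ref{thm:valency}) and finite stabilizers is correct and is precisely what the paper does.

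There are, however, two genuine gaps.

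First, your cocompactness argument via (i) is not right as stated. There is no weight ``read off from $X$'' for a curve sitting in a given topological decomposition: weights are free integer parameters on vertices of $\hX$, and the vertex over $(\Gamma,\mathcal{A})$ with weight $k$ on some $\gamma\in\Gamma$ is at $\hX$-distance exactly $|k|$ from the vertex with weight $0$. So no fixed ``canonically weighted'' vertex is coarsely dense in its fibre, and (i) fails. The paper's argument avoids this entirely: it builds into the definition of the action that a Dehn twist along $\gamma\in\Gamma$ shifts the weight of $\gamma$ by $\pm 1$. Thus any two weighted decompositions lying over the same topological $(\Gamma,\mathcal{A})$ are in the same $\Mod(X)$-orbit (up to the bounded ambiguity of the quasi-action), and the quotient of $\hX$ by $\Mod(X)$ coincides with the finite quotient of $\hS$ by $\Mod(\Sigma)$. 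That is the one-line cocompactness argument; your (i)--(ii) reformulation is both more complicated and, in the form written, false.

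Second, you explicitly ``grant'' the step that the action is by uniform quasi-isometries, but this is where the paper does essentially all of the work. The paper's lemma defines how $\varphi\in\Mod(X)$ acts on the weight of a curve $\alpha$ by choosing auxiliary arcs $a_1,a_2$ in $\mathcal{A}$ on either side of $\alpha$, concatenating them with the appropriate power of $\alpha$ to form an arc $a$, and then measuring (using the metric $X$) the twist of $\varphi(a)$ around $\varphi(\alpha)$. The substance is showing this is well-defined up to a uniformly bounded error (at most $\pm 2$, via the same disjointness argument as in Lemma~\ref{lem:curveadding2}, since different admissible choices of auxiliary arcs intersect at most once), and that vertices at distance $1$ are sent to vertices a uniformly bounded distance apart. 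Without this, you have not established that the action is a quasi-action at all, and the Milnor--\v{S}varc lemma does not apply.
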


By work of Masur and Minsky \cite{Masur-MinskyII}, the geometric hexagon graph is also quasi-isometric to the \emph{marking graph}. The proof of this theorem follows from a version of the Milnor-Schwartz lemma for quasi-actions. Most of the work goes in to showing the graph has the properties that allow this to work. It is interesting to note that the topological graph, which requires only topological input, provides a coarse model for deformation spaces of hyperbolic surfaces, whereas the geometric graph provides a model for $\Mod(\Sigma)$.

{\bf Organization.} In a preliminary part (Section \ref{sec:prelim}), we define the main building blocks we need, namely hexagon decompositions with and without weights. In Section \ref{sec:topo} we define and study the topological hexagon graph, and show it is a coarsely equivalent to the pants graph. In Section \ref{sec:ghg}, we study the geometric version and show it is a coarse model for the mapping class group.

{\bf Acknowledgements.} We thank Mark Bell, Mark Hagen, Chris Leininger, Pallavi Panda and Binbin Xu for fruitful discussions, comments and suggestions.

\section{Preliminaries}\label{sec:prelim}

Let $\Sigma$ be a finite type orientable surface with all connected components of negative Euler characteristic. For reasons of clarity, we think of boundary as consisting of curves (and not punctures or marked points). A {\it curve} will be an abbreviation for an isotopy class of essential simple closed curve. If it is non-peripheral to boundary it is an interior curve. On a surface with boundary, by {\it arc} we will mean a simple arc between boundary elements up to homotopy with endpoints gliding on the boundary.

We will also need to consider multicurves which are collections of disjoint and distinct curves. Note that there are different uses of the word multicurve in the literature but as all of our curves are simple, we avoid calling them systematically simple multicurves. Similarly, given a surface with boundary, a multiarc is a collection of distinct and disjoint arcs.

A multicurve or multiarc is said to be maximal if it is maximal with respect to inclusion. Note that a maximal multicurve is a pants decomposition, and a maximal multiarc decomposes the surface into hexagons. (Note that it is for exactly this reason that we prefer boundary curves to punctures or marked points: otherwise we would have (ideal) polygons of different sizes in the complementary regions of our collection of arcs.)

We denote by $\kappa_{\mathrm{c}}(\Sigma)$ the curve complexity of $\Sigma$, that is the number of curves in a pants decomposition. Similarly, we denote by $\kappa_{\mathrm{a}}(\Sigma)$ the arc complexity, that is the number of arcs in a hexagon decomposition. For example if $\Sigma$ is closed and of genus $g$, we have $\kappa_{\mathrm{c}}(\Sigma)= 3g-3$ and $\kappa_{\mathrm{a}}(\Sigma\setminus \Gamma)= 4g-4$ for any non-empty multicurve $\Gamma$.

A hexagon decomposition is a pair $(\Gamma,\A)$ where $\Gamma$ is a non-empty multicurve and $\A$ is a maximal multiarc on $\Sigma\setminus \Gamma$. In the case where $\Sigma$ has boundary, we require that $\Gamma$ contain all peripheral curves of $\Sigma$.

A weighted multicurve $(\Gamma, w)$ is a multicurve $\Gamma=\bigcup_{k=1}^{|\Gamma|} \gamma_k$ together with a collection of weights $w=w(\Gamma) = \{k_1,\cdots, k_{|\Gamma|}\}\in \Z^{|\Gamma|}$. Note we only allow weights to be integers.

It will be useful to consider metric surfaces: for given $\Sigma$ we consider a fixed hyperbolic surface $X$ homeomorphic to $\Sigma$, and if $\Sigma$ has boundary, we ask that the boundary elements of $X$ be realized by simple closed geodesics. (This is simply because we want to be able to measure the lengths of arcs in the easiest possible way.) Thus on $X$, curves are uniquely realized by simple closed geodesics and arcs are uniquely realized by simple orthogeodesics (geodesics orthogonal to their terminal simple closed geodesics). A maximal multiarc is then realized by a maximal collection of disjoint orthogeodesics and the complementary region is a collection of right-angled hexagons.

Before introducing the hexagonal graphs, we recall that the pants graph $\mathcal{P}(\Sigma)$ is the graph where vertices are pants decompositions and two vertices are related by an edge if they differ by an elementary move. The two types of elementary moves are illustrated in figures \ref{fig:elementary1} and \ref{fig:elementary2}.

\begin{figure}[htbp]
\begin{center}
\includegraphics[width=6cm]{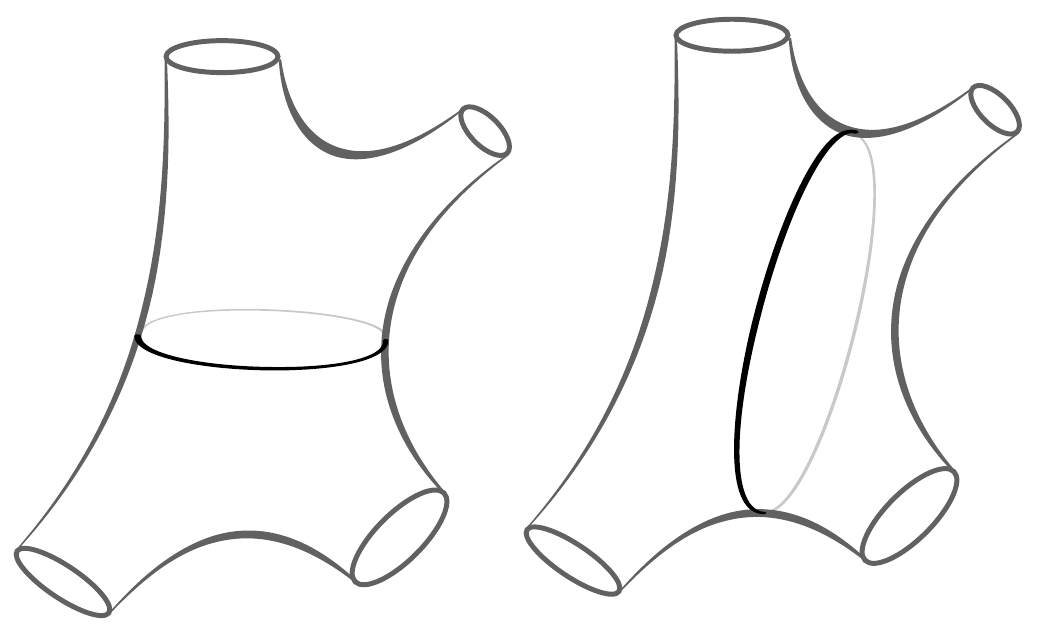}
\caption{The first type of elementary move is on a four-holed sphere}
\label{fig:elementary1}
\end{center}
\end{figure}

\begin{figure}[htbp]
\begin{center}
\includegraphics[width=6cm]{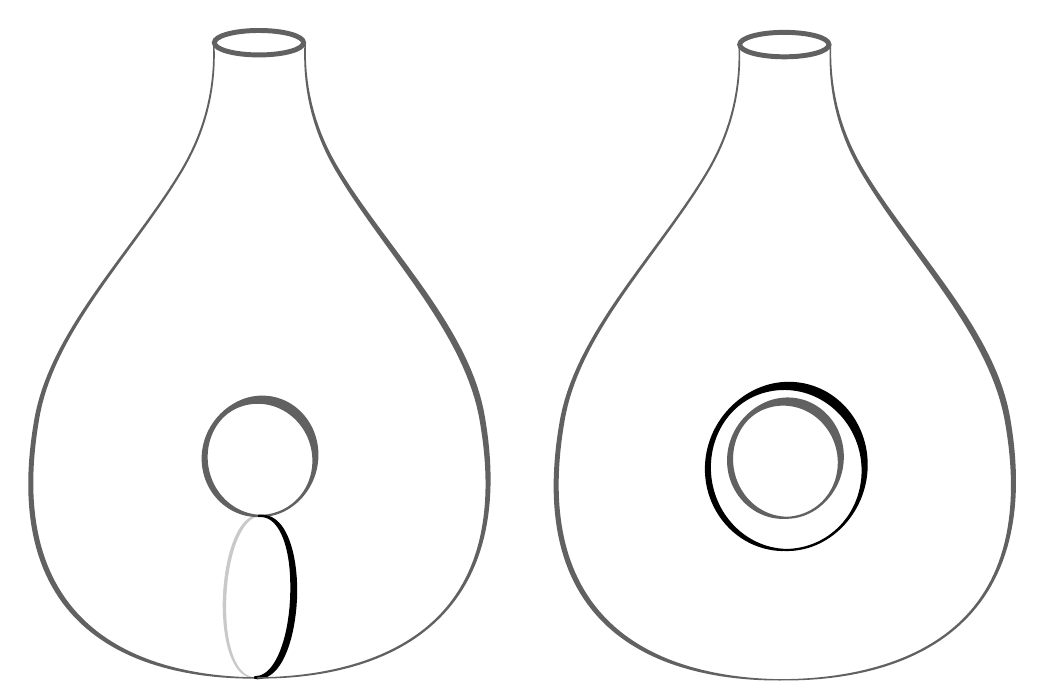}
\caption{The second type of elementary move is on a one-holed torus}
\label{fig:elementary2}
\end{center}
\end{figure}

Brock's result \cite{Brock} that the pants graph is quasi-isometric to Teichm\"uller space with the Weil-Petersson metric is one example among many of how to use a combinatorial model to study moduli type spaces and their isometry groups. We refer the reader to \cite{Behr, Brock-Farb, Masur-Sch-disk, Masur-Sch-pants, Brock-Margalit, Brock-Masur, Hat-Thu,Disarlo-Parlier, Gultepe-Leininger, Hatcher-triangulation, Mosher} for others.  

\section{The topological graph and its properties}\label{sec:topo}

\subsection{The definition}

The topological version only depends on $\Sigma$ and does not depend on a choice of metric, hence we denote it $\hS$.

Vertices will be given by hexagon decompositions and hexagon decompositions are related by an edge if they satisfy one of the two relations. The first one is a type of flip relation between maximal multiarcs.

\noindent {\it The flip relation.} $H=(\Gamma, \A)$ and $H'= (\Gamma', \A')$ are related by a flip if $\Gamma=\Gamma'$ and $\A$ and $\A'$ differ by a single arc. This is analogous to flip operations between triangulations. See Figure \ref{fig:flip} for an illustration.

\begin{figure}[htbp]
\begin{center}
\includegraphics[width=6cm]{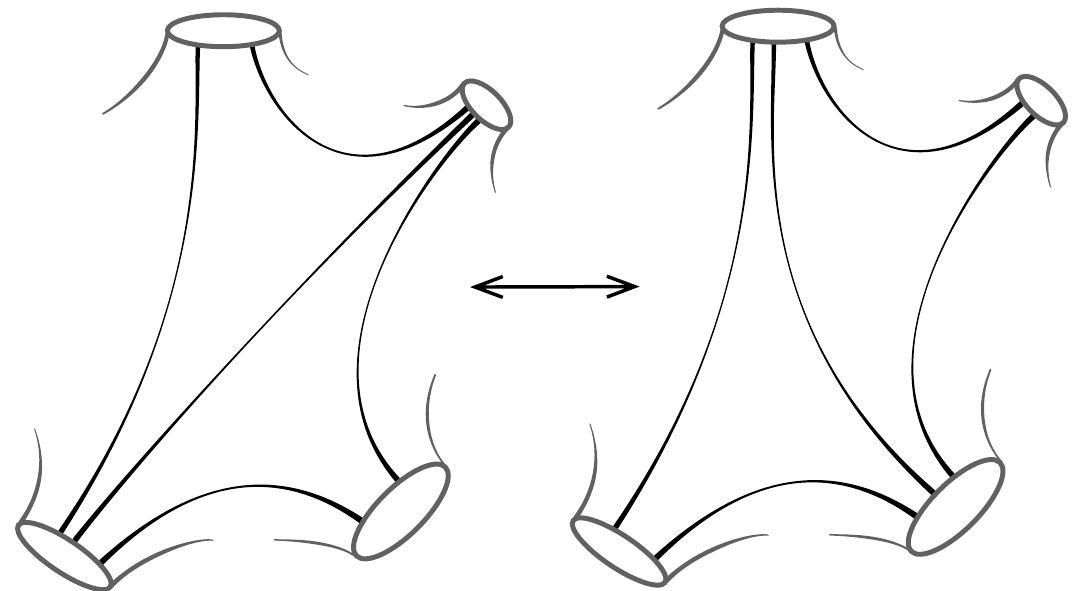}
\caption{The illustration of a flip. Note that the curves of $\Gamma$ the arcs terminate on need not be distinct.}
\label{fig:flip}
\end{center}
\end{figure}

The second one relates to transformations of the multiarc.

\noindent {\it Curve addition.}

Let $H=(\Gamma, \A)$ be a hexagon decomposition with $\Gamma$ non-maximal. A curve $\alpha$ is said to be compatible with $H$ if $\alpha \cap \Gamma= \emptyset$ and $i(a,\alpha)\leq 1$ for all $a\in \A$.

For $\alpha$ compatible with $H$, we obtain a new hexagon decomposition $H' = (\Gamma', \A')$ as follows. We set $\Gamma'= \Gamma\cup\alpha$. Now we consider $\A'$ the collection of arcs obtained from $\A$ which are either disjoint from $\alpha$ or which are obtained by surgering an arc of $\A$ at its intersection point with $\alpha$. The next lemma shows that $\A'$ is maximal, and hence $H'$ is a hexagon decomposition.

\begin{lemma}
The multiarc $\A'$ is maximal on $\Sigma \setminus \Gamma'$
\end{lemma}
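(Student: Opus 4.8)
The plan is to reduce the statement to a claim about complementary regions and then carry out a local analysis along $\alpha$. Recall --- it is elementary, and one direction is already noted above --- that a multiarc on a surface all of whose boundary is regarded as curves is maximal with respect to inclusion exactly when each of its complementary regions is a hexagon: a non-hexagonal region always carries an essential arc disjoint from the multiarc, while a hexagon carries none. So it suffices to show that $\A'$ is a multiarc on $\Sigma\setminus\Gamma'$ and that every complementary region of $\Gamma'\cup\A'$ in $\Sigma$ is a hexagon.

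First I would pin down the position of $\alpha$. Put $\alpha$ in minimal position with respect to $\A$; compatibility of $\alpha$ with $H$ gives $\alpha\cap\Gamma=\emptyset$ and $i(a,\alpha)\le 1$ for every $a\in\A$, so $\alpha$ meets each arc of $\A$ in at most one point. Inside $\Sigma\setminus(\Gamma\cup\A)$, a disjoint union of hexagons, $\alpha$ is then a disjoint union of chords; being disjoint from the curves it can enter or leave a hexagon only through an arc-side, and since it meets each arc-side at most once it crosses each hexagon in at most one chord, joining two of the three arc-sides. Hence $\alpha$ passes through a cyclic list of pairwise distinct hexagons $H_1,\dots,H_m$, crossing a cyclic list of pairwise distinct arcs of $\A$, and all the surgeries defining $\A'$ are supported in disjoint disc neighbourhoods of these finitely many crossing points.

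The heart of the proof is the local picture in each hexagon $H_j$ met by $\alpha$. Its chord of $\alpha$ runs from one arc-side $a$ to another arc-side $b$; exactly one curve-side, say $\gamma$, lies between $a$ and $b$ on one side of the chord, and the third arc-side $c$ together with the remaining two curve-sides lies on the other side. Simply cutting $H_j$ along the chord produces a quadrilateral (sides: a piece of $a$, the curve-side $\gamma$, a piece of $b$, the chord) and a hexagon. The effect of surgering $a$ and $b$ --- sliding their crossing points along $\alpha$ rather than merely cutting the arcs --- is precisely to amalgamate that quadrilateral into the neighbouring region, so that $H_j$ is replaced by a single hexagonal region bounded by $c$, the two surgered pieces of $a$ and $b$, two curve-sides, and the chord, the last of these now being a piece of the new curve $\alpha\in\Gamma'$. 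I would make this precise and check: (i) the two surgeries applied to one crossed arc, viewed from the two hexagons sharing it, agree; (ii) the surgered arcs are essential --- a surgered arc bounding a disc, after regluing along $\alpha$, would either render an arc of $\A$ inessential or force $\alpha$ to be null-homotopic or peripheral, none of which is possible; (iii) no isotopy class of $\A'$ is counted twice. Hexagons not met by $\alpha$ are untouched, so every complementary region of $\Gamma'\cup\A'$ is a hexagon.

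The step I expect to be the main obstacle is exactly this local analysis: describing the surgered arcs concretely enough to make the amalgamation rigorous --- so that the complementary regions come out to be hexagons and neither larger nor smaller polygons --- and matching the surgeries performed in adjacent hexagons along a shared crossed arc. Disjointness of $\A'$ is essentially automatic, since the modifications occur in disjoint neighbourhoods of isolated points. As a numerical check on the bookkeeping one can use that cutting along a curve preserves Euler characteristic, so that $\kappa_{\mathrm a}(\Sigma\setminus\Gamma')=\kappa_{\mathrm a}(\Sigma\setminus\Gamma)=|\A|$; since any multiarc realizing $\kappa_{\mathrm a}$ arcs cannot be enlarged and is therefore maximal, it is equivalent to verify that the surgery produces exactly $|\A|$ distinct arcs, which is the numerical shadow of the amalgamation described above.
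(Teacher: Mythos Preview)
Your proposal is correct and takes essentially the same approach as the paper: both argue that $\alpha$, being disjoint from $\Gamma$, cuts each hexagon it meets through arc-sides into a quadrilateral and a hexagon, and that the quadrilaterals disappear because their two arc-sides are freely homotopic on $\Sigma\setminus\Gamma'$ (your ``amalgamation into the neighbouring region'' and the paper's ``pair of subarcs \ldots\ freely homotopic'' are the same phenomenon viewed from two angles). The paper's proof is a four-sentence sketch of exactly this; your version is a more careful unpacking, with the essentiality and non-duplication checks and the Euler-characteristic count added as sanity checks the paper leaves implicit.
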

\begin{proof}
The complementary region of $H$ on $\Sigma$ is a collection of hexagons. The curve $\alpha$, being disjoint from $\Gamma$, crosses these hexagons in the sides corresponding to arcs of $\A$. The complementary regions to $\alpha$ on each hexagon are thus a collection of quadrilaterals and hexagons. The quadrilaterals correspond to a pair of subarcs of $\A$ that will be freely homotopic on $\Sigma\setminus \Gamma'$ where $\Gamma' = \Gamma \cup \alpha$, and so the resulting mutliarc decomposes $\Sigma\setminus \Gamma'$ into hexagons.
\end{proof}

We say that $H=(\Gamma, \A)$ and $H'= (\Gamma', \A')$ are related by a curve addition if $H'$ is obtained by adding a compatible curve $\alpha$ as described above.

\begin{definition} The \emph{topological} hexagon decomposition graph $\hS$ is the graph whose vertex set is given by hexagon decompositions and $H,H'$ are joined by an edge if they are either related by a flip or a curve addition.
\end{definition}

\begin{remark}
The operation of curve addition leads naturally to the notion of curve removal, which is just the formal opposite operation, meaning that $H$ can be obtained from $H'=(\Gamma',\A')$ by curve removal if $H'$ is obtained by a curve addition to $H$. However, in contrast with curve addition, the removal of a curve $\alpha\in \Gamma'$ is not uniquely defined by a choice of curve $\alpha$. Indeed, consider $\alpha$ compatible with $H$, and $\tilde{H}$ obtained by Dehn twisting $H$ along $\alpha$. Observe that $\alpha$ is also compatible with $\tilde{H}$. If adding $\alpha$ to $H$ results in $H'$, then adding $\alpha$ to $\tilde{H}$ also results in $H'$. In particular this means that $\hS$ contains vertices of infinite valency and finite valency. Infinite valency comes from curve removal, and certain curves are not removable. As such, finite valency corresponds to hexagon decompositions without any removable curves. \end{remark}

\subsection{Connectivity}

The first thing we show about these graphs is their connectivity.

For this a useful quantity will be the following. For any multicurve $\Gamma$, the subsurface $\Sigma'\subset \Sigma$ obtained by cutting $\Sigma$ along $\Gamma$ has an associated flip-graph, which is the subgraph of $ \calH(\Sigma)$ consisting in all vertices of the form $(\Gamma,\A)$, and edges coming from flip relations between them. By standard results on flip-graphs, this subgraph is connected. Furthermore, we can take its quotient by homeomorphisms that fix curves in $\Gamma$ individually, to obtain a finite graph. The diameter of this modular subgraph is bounded by a constant that only depends on the topology of $\Sigma'$ and in fact on its arc complexity. Note that this only depends on the homeomorphism type of $\Gamma$. We can thus take the maximum of these diameters among all homeomorphism types of (non-empty) $\Gamma$. We denote this quantity $D=D(\Sigma)$. It is a useful quantity because if it means that from any given hexagon decomposition, in at most $D$ moves, we can choose the topological type of arcs.

\begin{lemma}\label{lem:gotopants}

Any $(\Gamma,\A)\in \calH(\Sigma)$ is connected to a $(P, \A_P)\in \calH(\Sigma)$ where $P$ is a pants decomposition and $\Gamma\subset P$ in at most $K$ moves where $K$ is a constant that only depends on topology.
\end{lemma}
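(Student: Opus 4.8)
The plan is to build $\Gamma$ up to a pants decomposition one curve at a time, using the quantity $D=D(\Sigma)$ to repair the arc system before each curve is added. I claim $K=\kappa_{\mathrm{c}}(\Sigma)\,(D(\Sigma)+1)$ works. Set $(\Gamma_0,\A_0)=(\Gamma,\A)$, and suppose we have reached some $(\Gamma_i,\A_i)\in\hS$ with $\Gamma\subseteq\Gamma_i$ and $\Gamma_i$ non-maximal. Since $\Gamma_i$ is non-maximal, some component $Y$ of $\Sigma\setminus\Gamma_i$ is not a pair of pants, hence $Y$ carries an essential non-peripheral simple closed curve; fix one such $\alpha$, so that $\Gamma_i\cup\alpha$ is a multicurve with $|\Gamma_i\cup\alpha|=|\Gamma_i|+1$. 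The crux of one step is the following claim: there is a hexagon decomposition $(\Gamma_i,\A_i^{*})\in\hS$ with which $\alpha$ is compatible, i.e.\ $i(a,\alpha)\le 1$ for every $a\in\A_i^{*}$.

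Granting the claim, the step is finished as follows. By the definition of $D=D(\Sigma)$, in at most $D$ flip moves we can pass from $(\Gamma_i,\A_i)$ to a hexagon decomposition of the form $(\Gamma_i,\phi(\A_i^{*}))$, where $\phi$ is a homeomorphism of $\Sigma$ fixing each curve of $\Gamma_i$ individually. Then $\phi(\alpha)$ is compatible with $(\Gamma_i,\phi(\A_i^{*}))$, is disjoint from $\Gamma_i$, and $\Gamma_i\cup\phi(\alpha)$ is a multicurve of size $|\Gamma_i|+1$. Performing the curve addition of $\phi(\alpha)$ --- one further move --- yields $(\Gamma_{i+1},\A_{i+1})\in\hS$ with $\Gamma\subseteq\Gamma_i\subsetneq\Gamma_{i+1}$. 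Thus each unit increase of $|\Gamma_i|$ costs at most $D+1$ moves, and after at most $\kappa_{\mathrm{c}}(\Sigma)-|\Gamma|\le\kappa_{\mathrm{c}}(\Sigma)$ such steps the multicurve becomes maximal, i.e.\ a pants decomposition $P\supseteq\Gamma$, with associated hexagon decomposition $(P,\A_P)\in\hS$. The total number of moves is at most $\kappa_{\mathrm{c}}(\Sigma)(D(\Sigma)+1)=K$, which depends only on the topology of $\Sigma$.

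It remains to prove the claim, which is the main (though elementary) point. It amounts to running the curve-addition construction in reverse: starting from the chosen $\alpha\subset Y$, one builds by hand a maximal multiarc on $Y$ in which exactly two arcs cross $\alpha$, transversally once each, and all remaining arcs are disjoint from $\alpha$ --- using the standard model of $Y$ cut along $\alpha$ together with a hexagon decomposition of the resulting piece(s) whose seams re-assemble, across the annular neighbourhood of $\alpha$ that was opened, into such a multiarc. Combining this with arbitrary hexagon decompositions of the other components of $\Sigma\setminus\Gamma_i$ produces $\A_i^{*}$. I expect the fiddliest point to be checking that the re-assembled multiarc genuinely decomposes $\Sigma\setminus\Gamma_i$ into hexagons (and not larger polygons), but this is exactly the statement of the lemma proved above for curve addition, read backwards, so no new idea is needed.
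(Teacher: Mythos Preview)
Your argument is correct and follows the same skeleton as the paper's: iterate ``flip a bounded number of times, then add one curve'' until $\Gamma$ becomes a pants decomposition, using the modular-diameter constant $D(\Sigma)$ to bound the flip cost at each step. Your constant $\kappa_{\mathrm{c}}(\Sigma)(D+1)$ is essentially the paper's $(D+1)(\kappa(\Sigma)-1)$.

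The only substantive difference is in how the key claim (existence of a hexagon decomposition $(\Gamma_i,\A_i^{*})$ compatible with some new curve) is established. The paper does this concretely: it locates an embedded non-peripheral cylinder in the non-pants component $Y$, bounded by two arcs and subarcs of $\partial Y$, and writes down an explicit hexagon decomposition of that cylinder so that its core curve is visibly compatible; this is then completed to all of $\Sigma\setminus\Gamma_i$. You instead fix $\alpha$ first and propose to ``run curve addition in reverse'': hexagon-decompose $Y\setminus\alpha$ and reassemble across $\alpha$. That works, but your appeal to the curve-addition lemma ``read backwards'' is not quite a proof: that lemma says adding a compatible curve preserves hexagons, whereas what you need is that \emph{some} hexagon decomposition of $Y\setminus\alpha$ reassembles to a hexagon decomposition of $Y$---i.e.\ that $\alpha$ is removable from \emph{some} $(\Gamma_i\cup\alpha,\A')$. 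The paper explicitly notes that curves are not always removable from a given hexagon decomposition, so this step needs its own argument (matching up arc-endpoints on the two copies of $\alpha$, which requires choosing the decomposition of $Y\setminus\alpha$ carefully). The paper's cylinder picture sidesteps this by exhibiting the compatible $\A_i^{*}$ on $Y$ directly. Your ``exactly two arcs cross $\alpha$'' is also not forced in general, though it can be arranged for a judicious choice of $\alpha$---which is precisely what the cylinder construction does.
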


\begin{proof}
The basic idea of the proof is to add curves one by one to $\Gamma$ until we reach a full pants decomposition. To do so might require performing flip moves ahead of time.

If $\Gamma $ is not a pants decomposition, there exists a connected component $\Sigma'\subset \Sigma \backslash \Gamma$ which is not a pair of pants. We focus our attention on this subsurface, and show how to add a curve that lies inside $\Sigma'$ to our multicurve $\Gamma$.

Observe that any such subsurface contains an embedded cylinder not peripheral to one of its boundary components. This cylinder can be bounded by two arcs and subarcs of boundary components.

\begin{figure}[htbp]
\begin{center}
\includegraphics[width=8cm]{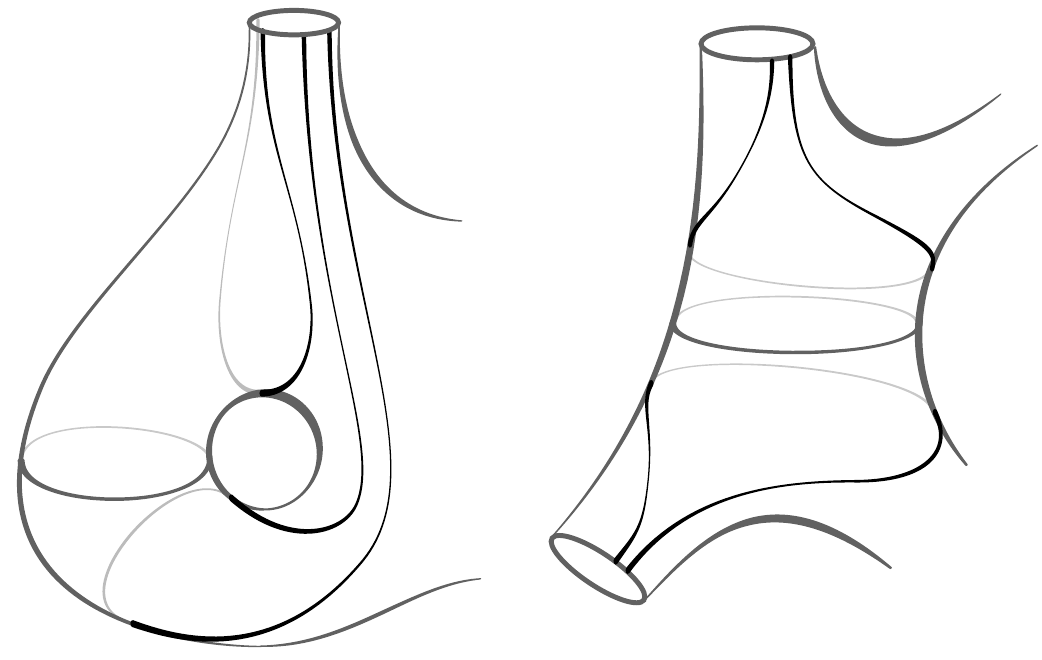}
\caption{Arcs bounding embedded cylinders in the non-planar and planar cases}
\label{fig:cylinder1}
\end{center}
\end{figure}

Figure \ref{fig:cylinder1} portrays the two possible cases, depending on whether the surface is planar or not. We add arcs to the cylinder subsurface as in Figure \ref{fig:cylinder2}.

By adding arcs, this can be completed into a hexagon decomposition. The core curve of the cylinder is now addable.

\begin{figure}[htbp]
\begin{center}
\includegraphics[width=8cm]{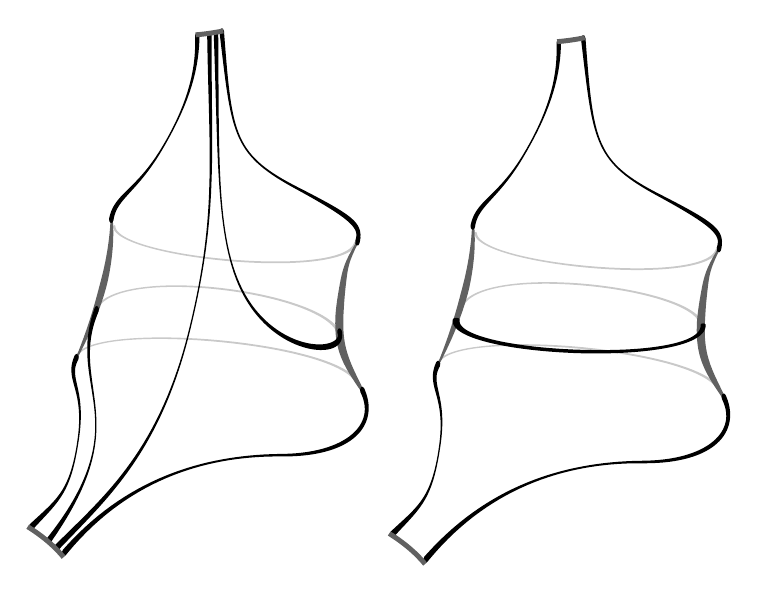}
\caption{A hexagon decomposition of the cylinder and its core curve}
\label{fig:cylinder2}
\end{center}
\end{figure}

Note that, as we are only interested in this configuration up to homeomorphism, such a hexagon decomposition can be reached in at most $D$ moves where $D$ is the constant described above. Hence, a curve has been added in $D+1$ moves. As there are at most $\kappa(\Sigma)-1$ curves to add, we reach a full pants decomposition in at most $(D+1) (\kappa(\Sigma)-1)$ moves.
\end{proof}

We now observe that moves in the pants graph can be emulated by moves in $\hS$. Consider two pants decompositions in $\pS$ that differ by an elementary move. To each we consider a hexagon decomposition by adding arcs, and such the hexagon decompositions only differ in the subsurface where the elementary move takes place. This is either a four-holed sphere or a one-holed torus. We add arcs to the corresponding subsurface as in figures \ref{fig:elementary1arcs} and \ref{fig:elementary2arcs}.

\begin{figure}[htbp]
\begin{center}
\includegraphics[width=10cm]{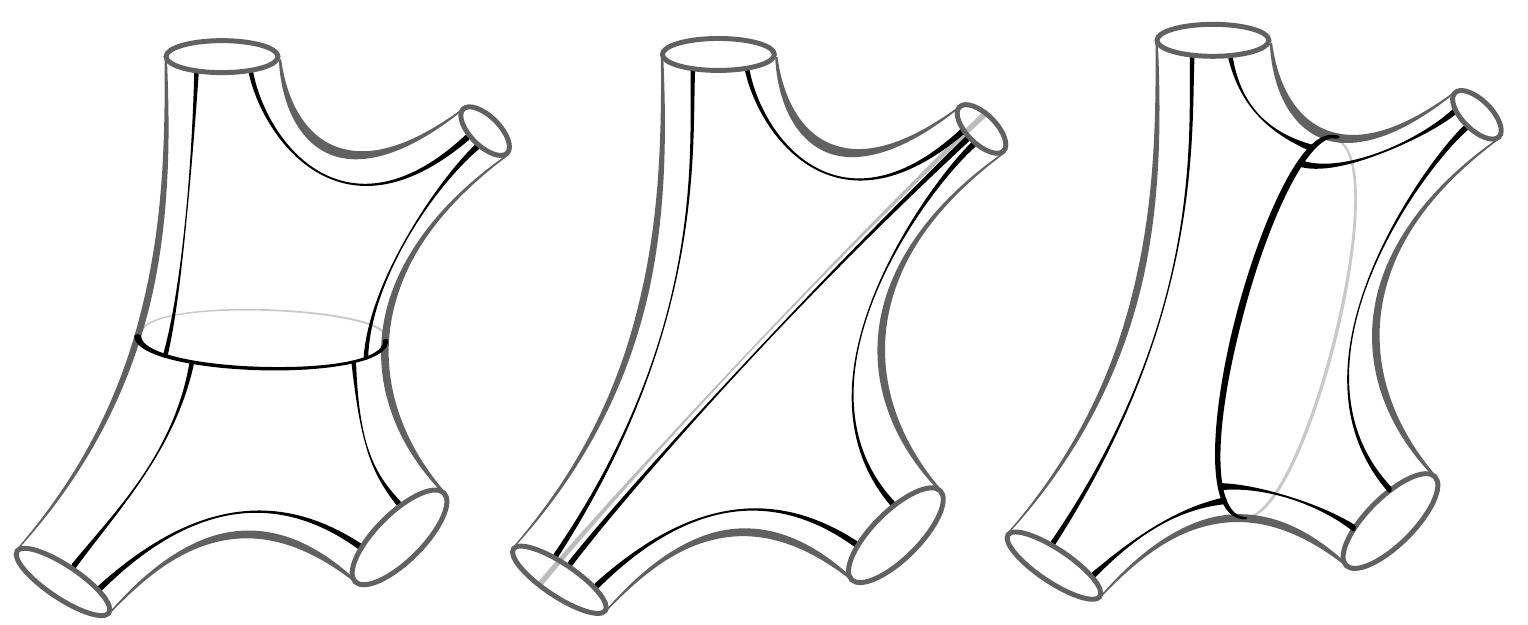}
\caption{The first type of elementary move with hexagons}
\label{fig:elementary1arcs}
\end{center}
\end{figure}

\begin{figure}[htbp]
\begin{center}
\includegraphics[width=10cm]{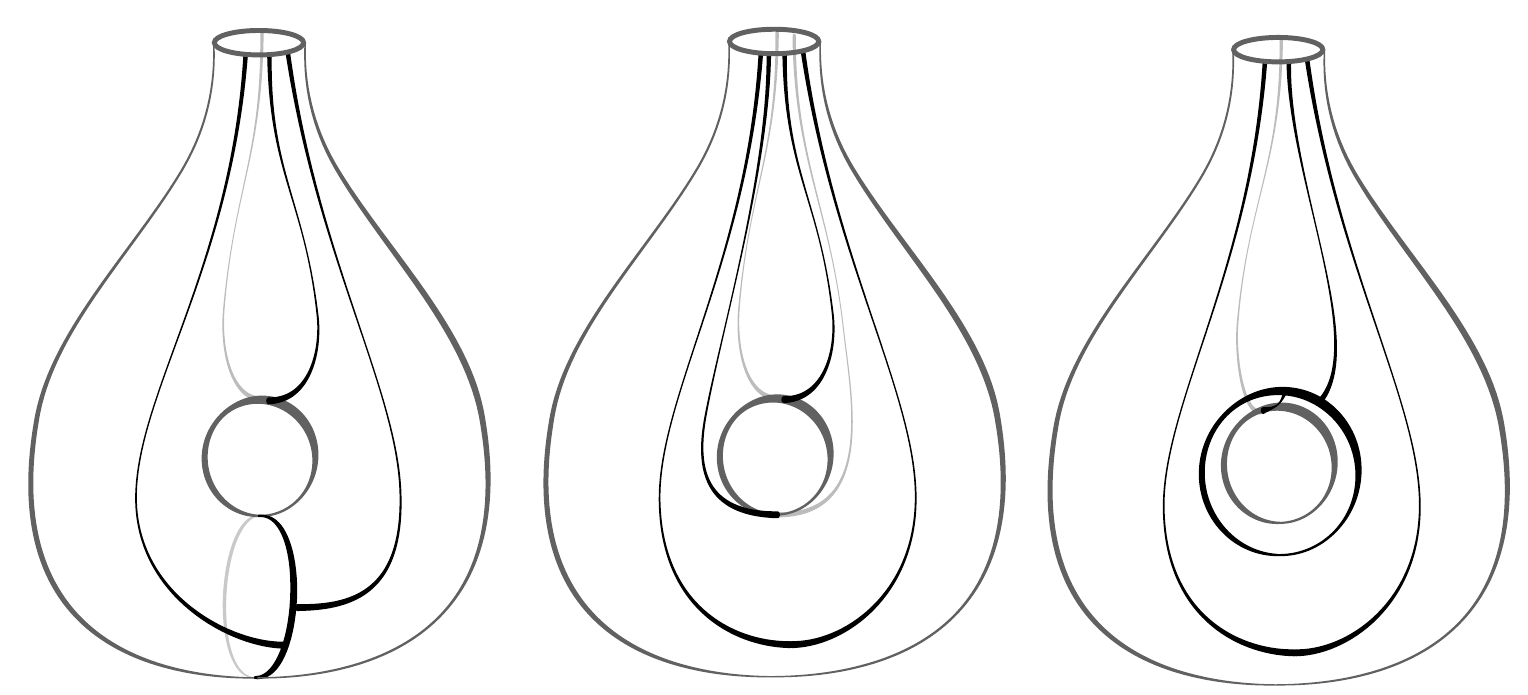}
\caption{The second type of elementary move with hexagons}
\label{fig:elementary2arcs}
\end{center}
\end{figure}

Now we can perform the moves illustrated in the figure to emulate the elementary move and deduce connectedness.

\begin{theorem}\label{thm:topoconnect}$\hS$ is connected.
\end{theorem}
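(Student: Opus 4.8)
The plan is to reduce connectivity of $\hS$ to connectivity of the pants graph $\pS$ (a standard fact, e.g.\ via Hatcher--Thurston type results) together with the connectivity of flip graphs of surfaces-with-boundary, bridging the two by the "emulation" picture sketched above.

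First, given arbitrary vertices $H_0=(\Gamma_0,\A_0)$ and $H_1=(\Gamma_1,\A_1)$ of $\hS$, apply Lemma~\ref{lem:gotopants} to each: in finitely many moves $H_i$ is joined to a vertex $(P_i,\A_{P_i})$ whose curve part $P_i$ is a pants decomposition. It therefore suffices to connect $(P_0,\A_{P_0})$ to $(P_1,\A_{P_1})$ inside $\hS$. Since $\pS$ is connected, choose a path $P_0=Q_0,Q_1,\dots,Q_n=P_1$ of pants decompositions with $Q_j$ and $Q_{j+1}$ differing by a single elementary move, supported in a subsurface $W_j$ homeomorphic to a four-holed sphere or a one-holed torus. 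For each $j$ I want a path in $\hS$ joining \emph{some} hexagon decomposition with curve part $Q_j$ to \emph{some} hexagon decomposition with curve part $Q_{j+1}$; chaining these (with the concatenation step below) produces the desired path.

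The per-edge construction is the emulation step. Write $Q_{j+1}=(Q_j\setminus\{c\})\cup\{c'\}$ with $c,c'$ curves in $W_j$ meeting minimally. Start from a hexagon decomposition $(Q_j,\A)$ whose arcs inside $W_j$ are arranged as in the left-hand configuration of Figure~\ref{fig:elementary1arcs} (resp.\ Figure~\ref{fig:elementary2arcs}); in that configuration the curve $c$ is removable, so curve removal yields a vertex $(Q_j\setminus\{c\},\A^-)$ of $\hS$. Then perform flips inside $W_j$ to move $\A^-$ to a configuration in which $c'$ is compatible, i.e.\ $c'\cap(Q_j\setminus\{c\})=\emptyset$ and $i(a,c')\le 1$ for every arc $a$; adding $c'$ gives a hexagon decomposition with curve part $Q_{j+1}$. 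One must check at each intermediate stage that the arc system is a genuine maximal multiarc and that the compatibility inequality holds, but these are finite, explicit checks on the four-holed sphere and the one-holed torus (the content of Figures~\ref{fig:flip}, \ref{fig:elementary1arcs}, \ref{fig:elementary2arcs}).

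Finally, to concatenate the local paths into one global path we must move between two hexagon decompositions $(Q_j,\A)$ and $(Q_j,\A')$ sharing the same curve part --- in particular to reach, at the start of step $j$, the specific arc configuration near $W_j$ demanded above. Since $Q_j$ is a maximal multicurve, the vertices of $\hS$ of the form $(Q_j,\bullet)$ together with flip edges form precisely the flip graph of the surface $\Sigma\setminus Q_j$, which is connected; so any two such vertices are joined inside $\hS$ by flips. (Lemma~\ref{lem:gotopants}'s constant $D$ controls the cost of reaching the prescribed configuration near $W_j$, should one want quantitative bounds for later use.) Putting the three steps together connects $H_0$ to $H_1$. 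The main obstacle is the emulation step: verifying, move by move on the two model subsurfaces, that each intermediate multiarc is maximal and that each added or removed curve satisfies $i(a,\alpha)\le 1$ against every arc; the rest is bookkeeping built on the known connectivity of flip graphs and of $\pS$.
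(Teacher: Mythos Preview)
Your proof is correct and follows essentially the same route as the paper: reduce to hexagon decompositions over full pants decompositions via Lemma~\ref{lem:gotopants}, emulate each elementary move of $\pS$ inside $\hS$ using the configurations of Figures~\ref{fig:elementary1arcs} and~\ref{fig:elementary2arcs}, and invoke connectivity of $\pS$. You have simply spelled out more explicitly than the paper does the concatenation step (using flip-graph connectivity over a fixed pants decomposition) needed to pass between the endpoints of successive emulated moves.
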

\begin{proof}
By Lemma \ref{lem:gotopants} any vertex of $\hS$ is connected to a hexagon decomposition with a full pants decomposition. As elementary moves in the pants graph can be emulated in $\hS$, connectivity of $\hS$ then follows from the connectivity of the pants graph.
\end{proof}

We end this subsection with a basic result about curve addition. It is a consequence of the following observation which is a standard fact about normal coordinates for triangulations.

\begin{observation}
Let $\p \in \hS$. Let $\gamma$ be a curve disjoint from $\Gamma$. Then $\gamma$ is uniquely determined by its intersection with arcs in $\A$.
\end{observation}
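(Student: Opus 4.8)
The plan is to think of the arcs in $\A$ as cutting $\Sigma\setminus\Gamma$ into hexagons, and to record, for a curve $\gamma$ disjoint from $\Gamma$, the combinatorial data of how $\gamma$ sits relative to this hexagonal cell structure. Since $\gamma$ is disjoint from $\Gamma$, it is contained in $\Sigma\setminus\Gamma$, and we may isotope $\gamma$ to be in minimal (taut) position with respect to the multiarc $\A$: it meets each arc of $\A$ transversely and minimally, and it enters and leaves each complementary hexagon in ``normal'' arcs, i.e. arcs with endpoints on two distinct sides of the hexagon that do not cut off a bigon. The numbers $i(a,\gamma)$ for $a\in\A$ are the normal coordinates of $\gamma$, and the claim is exactly that these coordinates determine $\gamma$ up to isotopy.

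The key steps I would carry out are as follows. First, fix a taut representative of $\gamma$ and observe that inside each hexagon $h$ the intersection $\gamma\cap h$ is a disjoint union of normal arcs; there are only finitely many isotopy types of normal arc in a hexagon (one for each unordered pair of the sides coming from $\A$, since endpoints glide on the boundary of $\Sigma\setminus\Gamma$ but the sides of $h$ lying on $\Gamma$ cannot be used — a normal arc with an endpoint there could be pushed off). Second, count: the number of parallel copies of each normal-arc type in $h$ is determined by the intersection numbers of $\gamma$ with the three $\A$-sides of $h$ via the usual linear relations (the ``switch conditions''), exactly as for normal curves relative to a triangulation or for measured train tracks. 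Third, these local pictures in adjacent hexagons must match up along each common arc $a\in\A$: once we know how many strands of $\gamma$ cross $a$, we know how the strands on the two sides are paired, because there is a unique way to connect them without creating a bigon. Finally, assembling the (uniquely determined) local pictures in all hexagons along the (uniquely determined) matchings across all arcs of $\A$ reconstructs $\gamma$ up to isotopy; and uniqueness of the taut representative (standard, since bigons between a simple closed curve and an arc can always be removed) shows two curves with the same coordinates are isotopic.

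The step I expect to be the main point to get right — though it is routine and is precisely why the observation is phrased as ``a standard fact about normal coordinates'' — is verifying that the local-to-global assembly is unambiguous: that no extra freedom is hidden in how strands are glued across the arcs of $\A$ or in the choice of normal representative. Concretely, one must check that a hexagon admits no ``corner-turning'' ambiguity beyond its finitely many normal arc types, and that a simple closed curve meeting $\A$ minimally has no closed component contained in a single hexagon (such a component would be inessential in $\Sigma\setminus\Gamma$, since a hexagon is a disk, hence could be removed, contradicting that $\gamma$ is essential). Both points follow from the fact that each complementary region is a disk (a hexagon) together with the standard bigon-removal argument, so the proof is short; I would simply cite the analogous statement for triangulations and indicate the above adaptation.
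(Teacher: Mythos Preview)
The paper does not actually prove this observation; it is stated as ``a standard fact about normal coordinates for triangulations'' and left without argument. Your write-up supplies precisely the standard proof the paper is alluding to: since $\gamma\cap\Gamma=\emptyset$, only the three $\A$-sides of each hexagon can be crossed, so the combinatorics in each hexagon reduce to those of a triangle, the three normal-arc multiplicities are determined linearly from the intersection numbers with the three $\A$-sides, and the gluing across each arc of $\A$ is forced by the linear order of strands. So your approach is correct and is essentially what the paper has in mind.

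Two small remarks. First, your reason for excluding the $\Gamma$-sides is slightly misstated: it is not that a normal arc ending on a $\Gamma$-side ``could be pushed off'', but simply that $\gamma$ is disjoint from $\Gamma$ by hypothesis, so no strand of $\gamma$ ever meets those sides. Second, the statement (both in the paper and in your argument) carries the usual caveat that a curve with all intersection numbers zero is peripheral in $\Sigma\setminus\Gamma$, hence isotopic in $\Sigma$ to some element of $\Gamma$, and the coordinates alone do not say which one; this is harmless for the application (Lemma~\ref{lem:curveadding1}), since addable curves are never already in $\Gamma$.
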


Note that it is also true for any simple multicurve.

\begin{lemma}\label{lem:curveadding1}
For any $\p\in \hS$, there are at most $K$ curves that can be added to $\p$ where $K$ depends only on the topology of $\Sigma$.
\end{lemma}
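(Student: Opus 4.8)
The plan is to bound the number of addable curves by the number of curves that are \emph{compatible} with $(\Gamma,\A)$, and to bound the latter using the Observation above together with the definition of compatibility.

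First I would note that, by definition of curve addition, any curve $\alpha$ that can be added to $(\Gamma,\A)$ must be compatible with it, so $\alpha\cap\Gamma=\emptyset$ and $i(a,\alpha)\le 1$ for every $a\in\A$. Hence it suffices to bound the number of curves $\alpha$ that are disjoint from $\Gamma$ and satisfy $i(a,\alpha)\in\{0,1\}$ for all $a\in\A$. Next, I would invoke the Observation above: a curve disjoint from $\Gamma$ is uniquely determined by its intersection data with the arcs of $\A$, i.e.\ the map $\alpha\mapsto\bigl(i(a,\alpha)\bigr)_{a\in\A}$ is injective on the set of curves disjoint from $\Gamma$. Consequently a compatible curve is determined by a point of $\{0,1\}^{\A}$. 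Finally, since $|\A|=\kappa_{\mathrm{a}}(\Sigma\setminus\Gamma)$ is bounded above by a constant $N=N(\Sigma)$ depending only on the topology of $\Sigma$ (see Section~\ref{sec:prelim}), there are at most $2^{N}$ such points, so at most $K:=2^{N}$ curves can be added to $(\Gamma,\A)$.

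The argument is essentially bookkeeping once the Observation is granted, so I do not expect any real obstacle; the only point deserving a word of care is why the \emph{full} normal-coordinate picture of a simple closed curve disjoint from $\Gamma$ is recovered from the bare intersection numbers with the arcs of $\A$. This is precisely the content of the Observation, and if one wished to unpack it one would use that each complementary hexagon meets $\A$ in exactly three of its six sides, so that a normal curve restricts in each hexagon to arcs of the three possible ``chord types'', whose multiplicities are recovered from the three side-weights by the same linear system as in a triangle, after which the local pictures glue together in a unique way. Since this is a standard fact about normal coordinates, I would simply cite the Observation and leave the bound as stated.
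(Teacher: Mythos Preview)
Your proposal is correct and follows essentially the same approach as the paper: addable curves are compatible, compatibility forces $i(a,\alpha)\in\{0,1\}$ for each $a\in\A$, and the Observation on normal coordinates then bounds the number of such curves by $2^{|\A|}=2^{\kappa_{\mathrm a}}$. The paper's proof is simply a terser version of your argument (and in fact phrases the intermediate step in terms of ``triangles in a triangulation'' rather than hexagons, but arrives at the identical bound).
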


\begin{proof}Given $\p$, only the compatible curves can be added to $\Gamma$. By definition, we may add only the curves which intersect $\calA$ at most once, hence a compatible curve can intersect each triangle in a triangulation at most twice. Hence there are at most $2^{|\calA|}=2^{\kappa_{\mathrm{a}}}$ ways to add a curve to $\Gamma$.
\end{proof}

Note the strong contrast to curve removal.

\subsection{A quasi-isometry with the pants graph}

In this subsection, we prove the following.

\begin{theorem}\label{thm:quasipants}
The graphs $\hS$ and $\mathcal{P}(\Sigma)$ are quasi-isometric.
\end{theorem}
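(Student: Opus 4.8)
The plan is to construct explicit coarse-Lipschitz maps in both directions between $\hS$ and $\mathcal{P}(\Sigma)$ and check they are coarse inverses. The natural map $\Phi \colon \hS \to \mathcal{P}(\Sigma)$ cannot simply be ``forget the arcs'', since a vertex $(\Gamma,\A)$ of $\hS$ has $\Gamma$ only a partial pants decomposition. Instead I would first fix, once and for all, a way of completing a partial multicurve to a pants decomposition that is ``as canonical as possible''; more precisely, using Lemma~\ref{lem:gotopants}, every $(\Gamma,\A)$ lies within $K$ moves of some $(P,\A_P)$ with $P \supset \Gamma$ a pants decomposition, and I would define $\Phi(\Gamma,\A)$ to be (a choice of) such a $P$. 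The ambiguity in this choice is bounded: any two completions reachable this way are a bounded distance apart in $\mathcal{P}(\Sigma)$, because they agree on $\Gamma$ and differ only inside the fixed-topology complementary subsurface, whose pants subgraph has diameter bounded in terms of topology. So $\Phi$ is well-defined up to bounded error.

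The second step is to check $\Phi$ is coarsely Lipschitz. It suffices to see that if $H, H'$ are joined by an edge in $\hS$ then $d_{\mathcal{P}}(\Phi(H),\Phi(H'))$ is bounded by a constant depending only on topology. There are two edge types. For a flip, $\Gamma = \Gamma'$, so the two completions both contain $\Gamma$ and again differ only in a bounded-topology subsurface, giving bounded pants-distance. For a curve addition $\Gamma' = \Gamma \cup \alpha$, the multicurve grew by one curve compatible with the hexagon structure; a completion of $\Gamma$ can be converted to a completion of $\Gamma'$ by a bounded number of elementary moves (first flip so that $\alpha$ becomes part of the completion, which costs boundedly many pants moves by the same diameter bound, then adjust). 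Combining, every edge of $\hS$ maps to a uniformly bounded-diameter set in $\mathcal{P}(\Sigma)$, so $\Phi$ is coarsely Lipschitz.

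The third step is the reverse map $\Psi \colon \mathcal{P}(\Sigma) \to \hS$: send a pants decomposition $P$ to a chosen hexagon decomposition $(P,\A_P)$ (any maximal multiarc; choices differ by at most $D$ flips, hence bounded error). That $\Psi$ is coarsely Lipschitz is exactly the content of the ``emulation'' discussion preceding Theorem~\ref{thm:topoconnect}: the two types of elementary pants moves, after adjoining the arcs of figures~\ref{fig:elementary1arcs} and \ref{fig:elementary2arcs}, are realized in $\hS$ by a uniformly bounded sequence of flips and curve additions/removals, so adjacent vertices of $\mathcal{P}(\Sigma)$ have images a bounded distance apart in $\hS$. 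Finally one checks the two compositions are within bounded distance of the identity: $\Phi \circ \Psi(P)$ is a completion of $P$ — but $P$ is already a pants decomposition, so $\Phi\circ\Psi(P)=P$ up to the bounded choice ambiguity; and $\Psi \circ \Phi(\Gamma,\A)$ first passes to a hexagon decomposition $(P,\A_P)$ with $P\supset\Gamma$ reached in $\le K$ moves, then re-picks arcs, staying within $K+D$ of $(\Gamma,\A)$ in $\hS$. Hence $\Phi$ is a quasi-isometry.

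The main obstacle I anticipate is making the completion map $\Phi$ genuinely well-behaved — ensuring that \emph{all} the choices involved (which subsurface to fill, which cylinder core to add at each stage of Lemma~\ref{lem:gotopants}, which arcs $\A_P$) introduce only errors bounded by a single topological constant, and in particular that a curve-addition edge does not move $\Phi$ by an unbounded amount. This is where one must be careful that ``bounded number of moves in $\hS$'' translates to ``bounded number of moves in $\mathcal{P}(\Sigma)$'' and vice versa; the key technical input is the finiteness/bounded-diameter statement for the modular flip subgraph attached to a fixed-topology subsurface, i.e. the constant $D(\Sigma)$, together with Lemma~\ref{lem:curveadding1} controlling curve additions. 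None of the individual estimates is deep, but assembling them so that a single constant works uniformly is the crux of the argument.
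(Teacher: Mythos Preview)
Your outline matches the paper's approach structurally: both construct maps $\phi \colon \pS \to \hS$ (your $\Psi$) by adding arcs, and $\psi \colon \hS \to \pS$ (your $\Phi$) by completing to a pants decomposition via Lemma~\ref{lem:gotopants}, then argue these are coarse-Lipschitz quasi-inverses. The map $\Psi$ and its Lipschitz property are handled correctly.

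The gap is in your treatment of $\Phi$. You assert that two completions of $(\Gamma,\A)$ to pants decompositions are a bounded $\pS$-distance apart ``because they agree on $\Gamma$ and differ only inside the fixed-topology complementary subsurface, whose pants subgraph has diameter bounded in terms of topology.'' This is false: the pants graph of a fixed subsurface has infinite diameter in general (already for a one-holed torus it is the Farey graph). The constant $D(\Sigma)$ bounds the diameter of the \emph{modular} flip-graph --- the quotient by homeomorphisms --- so it lets you reach some hexagon decomposition of a prescribed \emph{topological type} in boundedly many moves, but it says nothing about the $\pS$-distance between two \emph{specific} pants decompositions sharing $\Gamma$. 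The same flawed reasoning recurs in your Lipschitz argument for flip edges, and your curve-addition case appeals to ``the same diameter bound.'' You flag exactly this as the main obstacle, but the tool you propose (the modular flip-graph diameter together with Lemma~\ref{lem:curveadding1}) does not resolve it.

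What the paper does instead is use the arc system $\A$ as a fixed coordinate system. At each stage of the completion one performs at most $D$ flips and then adds a curve; the paper tracks the total intersection of the evolving arc system, and hence of each added curve $\alpha_k$, with the \emph{original} $\A_0$, obtaining a uniform bound $I_m$ depending only on topology. Thus every pants decomposition $P \supset \Gamma$ produced by the algorithm satisfies $i(P,\A_0) \le I_m$. Since a multicurve disjoint from $\Gamma$ is determined by its intersection vector with $\A_0$ (normal coordinates), there are only finitely many such $P$, and the maximum pairwise $\pS$-distance among them gives the required bound (this is the paper's Lemma~3.8). This intersection-tracking step is precisely what your proposal is missing; without it, neither the quasi-well-definedness of $\Phi$ nor its Lipschitz bound across flip edges goes through.
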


Before passing to the proof, we describe two maps between $\hS$ and $\mathcal{P}(\Sigma)$.

\underline{The map $\phi:\pS \to \hS$}

We define the map $\phi:\pS \to \hS$ as follows. Given $P\in \pS$, we add a collection of arcs to obtain an element of $\hS$. That is: $\phi(P)=(P,\calA)$. The choice of $\calA$ is arbitrary, but notice that any two choices are a bounded distance apart, and in fact are at most distance $D$ apart where $D$ is the diameter constant defined previously. As an example of a choice of $\calA$ one can add arcs to each pair of pants as in Figure \ref{fig:pantstohex}.

\begin{figure}[htbp]
\begin{center}
\includegraphics[width=7cm]{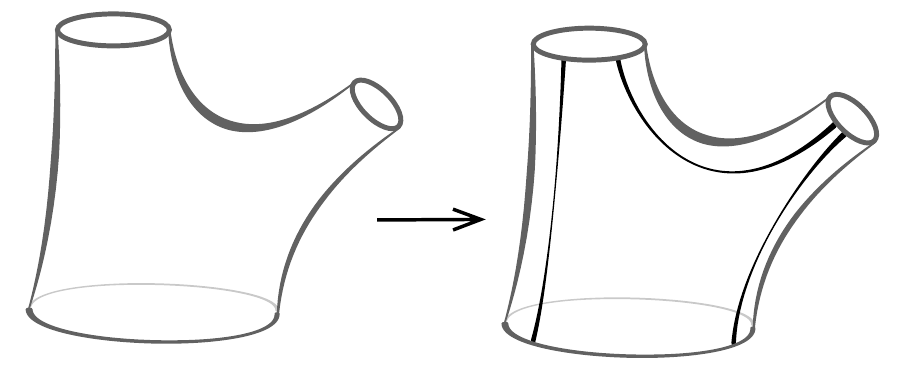}
\caption{A local picture of the map $\phi$}
\label{fig:pantstohex}
\end{center}
\end{figure}

\underline{The map $\psi:\hS \to \pS$}
This second map is less obvious and takes a hexagon decomposition and associates to it a pants decomposition. To define it, we shall use the strategy of Lemma \ref{lem:gotopants}, that is to add curves until the multicurve is a full pants decomposition. It is somewhat loosely defined, but this is ok as we will only need possible images to be at a bounded distance one from another.

Recall the strategy: Given $H=(\Gamma,\A) \in \hS$, where $\Gamma$ is not a pants decomposition, we add a curve to $\Gamma$. If it is not possible to add a curve, as shown in the proof of Lemma \ref{lem:gotopants}, by performing a minimal number of flips, we can add a curve, and then we repeat. Note that the number of necessary flips before adding a curve is at most $D$. The whole process ends in at most $(D+1) (\kappa(\Sigma)-1)$ steps. We define $\psi(H)$ to be the pants decomposition thus obtained. Observe that $\psi(\phi(P))=P$.

We now proceed that these maps satisfy certain properties that will lead us to the conclusion of Theorem \ref{thm:qipants}. We denote by $d_{\mathcal{H}}$ distance in $\hS$, and by $d_{\mathcal{P}}$ distance in $\pS$.

\begin{lemma}\label{lem:phidist}
There exists a constant $C_1=C_1(\Sigma)$ that depends only on the topology of $\Sigma$ such that for all $P,P'\in \pS$ we have
$$
d_{\mathcal{H}}(\phi(P), \phi(P')) \leq C_1\, d_{\mathcal{P}}(P,P').
$$
\end{lemma}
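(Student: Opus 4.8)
The plan is to show that $\phi$ is coarsely Lipschitz by checking it sends a single edge of $\mathcal{P}(\Sigma)$ to a uniformly bounded-diameter subset of $\hS$, and then chaining along a geodesic. So first I would fix $P, P' \in \pS$ joined by a single elementary move; by the triangle inequality in $\hS$ the general case follows by concatenating $d_{\mathcal P}(P,P')$ such steps, with $C_1$ the bound obtained in the single-edge case. Recall that $\phi$ involves an arbitrary choice of arc system, but any two choices for a fixed pants decomposition $P$ lie within distance $D$ in $\hS$ (flips in the complementary flip-graph, whose modular diameter is bounded by $D$); so it suffices to bound $d_{\mathcal H}$ between \emph{some} representative of $\phi(P)$ and \emph{some} representative of $\phi(P')$, absorbing an additive $2D$ into the constant.

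Next, for the single elementary move, I would use the fact that $P$ and $P'$ agree outside a subsurface $Y$ which is either a four-holed sphere or a one-holed torus (Figures \ref{fig:elementary1}, \ref{fig:elementary2}). Choose arc systems for $P$ and $P'$ that agree on $\Sigma \setminus Y$ and are the specific local configurations drawn in Figures \ref{fig:elementary1arcs} and \ref{fig:elementary2arcs}. The content of those figures is precisely an explicit finite path in $\hS$ — a bounded sequence of flips and curve additions/removals, all supported in (a neighborhood of) $Y$ — realizing the elementary move on the level of hexagon decompositions. Since $Y$ has bounded topological type, this path has length bounded by some universal constant $c_0$ independent of $P, P'$ and of where $Y$ sits in $\Sigma$. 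Combining, any representative of $\phi(P)$ is within $D + c_0 + D$ of some representative of $\phi(P')$, giving the edge bound with constant $C_1 := 2D + c_0$.

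Finally I would assemble: for arbitrary $P, P' \in \pS$ take a geodesic $P = P_0, P_1, \dots, P_n = P'$ in $\mathcal{P}(\Sigma)$ with $n = d_{\mathcal P}(P,P')$, and write
$$
d_{\mathcal H}(\phi(P), \phi(P')) \le \sum_{i=1}^n d_{\mathcal H}(\phi(P_{i-1}), \phi(P_i)) \le C_1 \, n = C_1 \, d_{\mathcal P}(P,P'),
$$
where each term is controlled by the single-edge estimate (after fixing, for each $P_i$, one representative of $\phi(P_i)$ — the inserted $2D$ slack handles the mismatch of choices at consecutive steps). This proves the lemma.

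\textbf{Main obstacle.} The only genuinely nontrivial point is verifying that the local moves depicted in Figures \ref{fig:elementary1arcs} and \ref{fig:elementary2arcs} really do connect compatible hexagon decompositions — i.e.\ that each intermediate arc/curve configuration is a legal vertex of $\hS$ (the curve to be added is actually \emph{compatible}, meaning disjoint from the current multicurve and crossing each arc at most once) and that consecutive configurations differ by exactly one flip or one curve addition. This is a finite, picture-level check on the four-holed sphere and one-holed torus, but it is where the real care is needed; everything else is bookkeeping with the triangle inequality and the diameter constant $D$.
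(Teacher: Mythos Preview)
Your proposal is correct and follows essentially the same approach as the paper: elementary moves in $\pS$ are emulated by a bounded number of moves in $\hS$ via the explicit local pictures in Figures~\ref{fig:elementary1arcs} and~\ref{fig:elementary2arcs}, and the general inequality follows by chaining along a geodesic. The paper's own proof is a single sentence pointing to those figures; you have simply made explicit the bookkeeping (the $2D$ correction for the arbitrariness of $\phi$, the triangle-inequality concatenation) that the paper leaves implicit, and correctly flagged the one substantive check --- that the intermediate configurations in the figures are legitimate vertices and edges of $\hS$.
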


\begin{proof}
As explained previously, moves in $\pS$ can be emulated by moves in $\hS$ (see figures \ref{fig:elementary1arcs} and \ref{fig:elementary2arcs}), hence the result.
\end{proof}

Note that, if $\phi$ is defined exactly as in Figure \ref{fig:pantstohex}, then $C_1$ above can be taken to be $2$. Otherwise, it might require performing flips inside each pair of pants beforehand.

\begin{lemma} There exists a constant $K=K(\Sigma)$, which only depends on the topology of $\Sigma$, such that the full image $\phi(\pS)$ of the map $\phi$ is $K$-dense in $\hS$.
\end{lemma}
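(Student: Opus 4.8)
The plan is to show that every hexagon decomposition $H=(\Gamma,\A)\in\hS$ lies within bounded distance of some $\phi(P)$, with the bound depending only on the topology of $\Sigma$. The natural candidate for $P$ is $\psi(H)$, and the natural bound comes from combining the cost of the $\psi$-procedure with the diameter constant $D$. First I would recall that the procedure defining $\psi$ starts from $H$ and, in at most $(D+1)(\kappa(\Sigma)-1)$ moves of $\hS$, produces a hexagon decomposition whose underlying multicurve is the pants decomposition $P=\psi(H)$. Call this terminal hexagon decomposition $H_P=(P,\A_P)$; by construction $d_{\mathcal{H}}(H,H_P)\leq (D+1)(\kappa(\Sigma)-1)$.

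The remaining gap is that $H_P$ need not equal $\phi(P)$, since $\phi(P)=(P,\A')$ for the fixed arbitrary choice $\A'$, whereas $H_P$ carries the arcs $\A_P$ accumulated during the procedure. But $H_P$ and $\phi(P)$ share the same multicurve $P$, so they both lie in the flip-subgraph of $\hS$ associated to $P$ (the subgraph of vertices of the form $(P,\cdot)$ with edges from flips). As discussed before Lemma \ref{lem:gotopants}, this subgraph is connected, and after quotienting by homeomorphisms fixing the curves of $P$ individually it has diameter bounded by the arc-complexity-dependent constant; taking the maximum over homeomorphism types of $P$ gives $D=D(\Sigma)$. Hence $d_{\mathcal{H}}(H_P,\phi(P))\leq D$. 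By the triangle inequality,
$$
d_{\mathcal{H}}(H,\phi(P))\leq d_{\mathcal{H}}(H,H_P)+d_{\mathcal{H}}(H_P,\phi(P))\leq (D+1)(\kappa(\Sigma)-1)+D,
$$
and setting $K=K(\Sigma):=(D+1)(\kappa(\Sigma)-1)+D$ — a quantity depending only on the topology of $\Sigma$ — shows $\phi(\pS)$ is $K$-dense in $\hS$.

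There is really only one point that needs care, and it is the main (minor) obstacle: making sure the arc configurations reached by the $\psi$-procedure genuinely lie in the flip-subgraph of the final $P$, i.e.\ that all the flip moves and curve additions performed along the way leave us, at the end, with a bona fide hexagon decomposition $(P,\A_P)$ of the same surface $P$ decomposes $\Sigma$ into — this is exactly what Lemma \ref{lem:gotopants} and the curve-addition lemma guarantee, so it is not a serious issue, but it should be stated cleanly. One subtlety worth a sentence: the diameter bound $D$ was defined for the modular flip-subgraph (quotiented by homeomorphisms fixing $\Gamma$ pointwise), and we are applying it here to the non-modular flip-subgraph of a specific $P$; since the flip-subgraph of $P$ surjects onto its modular quotient and $\phi(P)$ can be taken as any convenient representative, the distance from $H_P$ to $\phi(P)$ in the flip-subgraph is still at most $D$ after possibly re-choosing $\A'$ within its homeomorphism class — and re-choosing $\A'$ only changes $\phi$ by a bounded amount, which is already absorbed into the definition of $\phi$ being well-defined up to distance $D$.
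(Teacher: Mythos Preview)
Your proof is correct and follows essentially the same route as the paper: reach a hexagon decomposition over a full pants decomposition $P$ in at most $(D+1)(\kappa(\Sigma)-1)$ moves via Lemma~\ref{lem:gotopants}, then use the bound $D$ to pass between any two hexagon decompositions sharing the underlying multicurve $P$. Your final paragraph over-worries a non-issue: when $\Gamma=P$ is a pants decomposition, $\Sigma\setminus P$ is a disjoint union of pairs of pants, each of which has only finitely many arcs and trivial mapping class group fixing the boundary components, so the flip-subgraph over $P$ is already finite and coincides with its modular quotient; the bound $D$ applies directly without any re-choosing of $\A'$.
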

\begin{proof}
This follows from the proof of connectivity where we showed that any hexagon decomposition can be transformed into a hexagon decomposition with a full pants decomposition $P$ in a at most $(D+1) (\kappa(\Sigma)-1)$ steps. The map $\phi$ has in its image exactly one hexagon decomposition with a given pair of pants decomposition $P$. We now observe that any two hexagon decompositions with the same underlying multicurve (in this case $P$) are at most distance $D$ apart, which proves the claim.
\end{proof}

We now focus on the map $\psi$. The key step will be to show that it is ``quasi" well-defined, that is that any two possible choices of image are close to each other.

Given $H=(\Gamma_0,\A_0)$, we perform at most $D$ flips on $\A_0$ to obtain a new collection of arcs, say $\A'_0$. We then add a curve, say $\alpha_1$. Then because $\A_0$ and $\A'_0$ are related by at most $D$ flips, the (total) intersection between $\A_0$ and $\A'_0$ is bounded above by a function of the topology of $\Sigma$, say $I_1$. An explicit value for $I_1$ can be deduced from \cite[Corollary 2.18]{Disarlo-Parlier}.

We now make the following useful observation:

\underline{Observation:} As $\alpha_1$ intersects each arc of $\A'_0$ at most $1$ time, it also intersects $\A_0$ at most $I_1$ times.

To show this, note that $\alpha_1$ can be thought of as a concatenation of segments passing through the hexagons bounded by $\A'_0$ (at most one per hexagon). Now the homotopy class of each segment can be represented by a concatenation of at most 3 segments lying on the boundary of a hexagon (see Figure \ref{fig:hexsegment}).

\begin{figure}[htbp]
\begin{center}
\includegraphics[width=6cm]{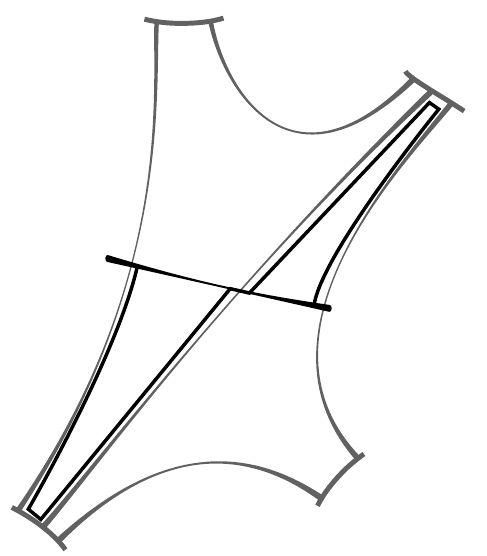}
\caption{An arc of $\alpha_1$ crossing two adjacent hexagons and represented by a broken path}
\label{fig:hexsegment}
\end{center}
\end{figure}

Then, by retracting back and forth segments if necessary, $\alpha_1$ can be represented by a concatenation of arcs of $\A_1$ and boundary segments of $\Gamma$, where each arc of $\A'_0$ appears at most one time. From this we can deduce that the total intersection between $\alpha_1$ and $\A_0$ is at most that of $\A'_0$ and $\A_0$, hence is at most $I_1$. We now add $\alpha_1$ to obtain a new hexagon decomposition, $\Gamma_1, \A_1$ with $\Gamma_1= \Gamma_0\cup \{\alpha_1\}$ and $\A_1$ the resulting hexagon decomposition when adding $\alpha_1$ to $\A'_0$. Observe that the total intersection between arcs in $\A_1$ and arcs in $\A_0$ is still bounded above by $I_1$.

We now repeat the above process to add a second curve and so on. At each step, we might be required to perform flips (at most $D$), which may further increase the intersection between the arcs in $\A_0$ and the arcs $\A'_k$ (the arcs at step $k$), but there is always a bound (denoted $I_k$) on this intersection which only depends on topology and the number of steps. Using the same trick as above, we add a curve $\alpha_k$ which intersects $\A_0$ at most $I_k$ times. The process ends in at most $m\leq \kappa_c(\Sigma)$ steps, resulting in a pants decomposition where each curve intersects the original multiarc at most $I_m$ times.

The following lemma will allow us to conclude.

\begin{lemma}
Consider $H= (\Gamma,\A)$ on a surface $\Sigma$. For $K$ a constant, let $P,Q$ be pants decompositions which both contain $\Gamma$, and such that $\max\{i(P,\A),i(P,\A)\}\leq K$. Then the pants distance between $P$ and $Q$ is bounded above by $R(K)$, a function of the topology of $\Sigma$ and $K$.
\end{lemma}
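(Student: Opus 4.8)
The plan is to combine two finiteness facts: only boundedly many multicurves can cross a fixed maximal multiarc boundedly often, and there are only finitely many hexagon decompositions up to the mapping class group; the first turns $P$ and $Q$ into members of a finite list, and the second makes the resulting distance bound topological.

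First I would reduce the statement to a counting problem. The arcs of $\A$ have their interiors in $\Sigma\setminus\Gamma$, so $i(\gamma,a)=0$ for every $\gamma\in\Gamma$ and $a\in\A$; writing $P=\Gamma\cup P_0$ and $Q=\Gamma\cup Q_0$ with $P_0,Q_0$ multicurves disjoint from $\Gamma$, the hypothesis becomes $i(P_0,\A)\le K$ and $i(Q_0,\A)\le K$. By the Observation preceding Lemma~\ref{lem:curveadding1} — which, as remarked there, applies to any simple multicurve — a multicurve disjoint from $\Gamma$ is determined by the vector of its intersection numbers with the individual arcs of $\A$. Each of these numbers is at most $K$, and the number of arcs is $\kappa_{\mathrm{a}}(\Sigma\setminus\Gamma)$, which depends only on the topology of $\Sigma$; hence there is a finite set $\mathcal{S}(\Gamma,\A,K)$ of pants decompositions of $\Sigma$ containing $\Gamma$ and meeting $\A$ at most $K$ times, with $|\mathcal{S}(\Gamma,\A,K)|\le N(\Sigma,K)$ a function of the topology of $\Sigma$ and of $K$ only. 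Both $P$ and $Q$ lie in $\mathcal{S}(\Gamma,\A,K)$.

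Next I would pass from finiteness to a distance bound. Since $\pS$ is connected, any two elements of the finite set $\mathcal{S}(\Gamma,\A,K)$ are joined by a path, so $R'(\Gamma,\A,K):=\max\{d_{\mathcal{P}}(P',Q') : P',Q'\in\mathcal{S}(\Gamma,\A,K)\}$ is finite. A priori $R'$ could depend on $(\Gamma,\A)$, and the point is that it does not beyond its dependence on topology: pants distance is $\Mod(\Sigma)$--invariant, and each $g\in\Mod(\Sigma)$ carries $\mathcal{S}(\Gamma,\A,K)$ bijectively onto $\mathcal{S}(g\Gamma,g\A,K)$, so $R'$ is constant on $\Mod(\Sigma)$--orbits of hexagon decompositions. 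There are finitely many orbits of non-empty multicurves $\Gamma$, and for each, finitely many maximal multiarcs on $\Sigma\setminus\Gamma$ up to the stabilizer of $\Gamma$ — this is exactly the finiteness of the modular flip-graph already invoked in defining $D(\Sigma)$. Taking $R(K)$ to be the maximum of $R'$ over a set of orbit representatives gives $d_{\mathcal{P}}(P,Q)\le R(K)$ with $R(K)$ depending only on the topology of $\Sigma$ and on $K$.

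I expect the only delicate point to be this last step, namely making the bound purely topological rather than dependent on the particular $(\Gamma,\A)$; the $\Mod(\Sigma)$--invariance together with finiteness of the orbits of hexagon decompositions resolves it. If an effective $R(K)$ were wanted, one could instead push $P$ and $Q$ into a regular neighbourhood of $\Gamma\cup\A$ and count crossings hexagon by hexagon to bound $i(P,Q)$ by an explicit function of $K$ and $\chi(\Sigma)$, and then invoke a standard quantitative bound for pants distance in terms of geometric intersection number; I would present the finiteness argument as the main line and mention this effective variant as a remark.
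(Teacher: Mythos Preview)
Your argument is correct and follows essentially the same finiteness strategy as the paper: bound the number of admissible $P,Q$ via normal coordinates, take a maximum, then remove the dependence on the particular decomposition by passing to finitely many topological types. The only difference is one of care: the paper asserts directly that the maximum depends only on the topological type of $\Gamma$, whereas you explicitly track the potential dependence on $\A$ and eliminate it via $\Mod(\Sigma)$--invariance together with finiteness of orbits of hexagon decompositions; your version is the more precise of the two.
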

\begin{proof}
This basically follows from a finiteness argument. Any multicurve on $\Sigma$, disjoint from $\Gamma$, is uniquely determined by its intersection numbers with arcs in $\A$. As we have a bound on these numbers ($K$), there are finitely many such pairs of multicurves, thus finitely many choices for $P$ and $Q$. Among these, there is a pair at maximal distance. This quantity only depends on the topology of $\Sigma$, the topological type of $\Gamma$, and $K$. Taking again the maximum among all topological types of $\Gamma$ (the number of these again only depends on the topology of $\Sigma$), we obtain an upper bound on the possible distance between $P$ and $Q$ as claimed.
\end{proof}

Now, using our upper bound of $I_m$ on the intersection of a pants decomposition with $H= (\Gamma_0,\A_0)$, and applying the above lemma, we can conclude the following:

\begin{proposition} Given $H$, any two possible choices of pants decomposition $P$ and $Q$ as images of $H$ by $\psi$ are at distance at most $R(I_m)$ in $\pS$.
\end{proposition}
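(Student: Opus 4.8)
The plan is to deduce the statement directly from the lemma immediately preceding it, once the two hypotheses of that lemma have been verified for the pair $P,Q$. First I would note that by construction the map $\psi$ only ever \emph{adds} curves to the original multicurve and performs flips, and that a flip by definition leaves the underlying multicurve unchanged; hence every pants decomposition arising as an image of $H=(\Gamma_0,\A_0)$ under $\psi$ contains $\Gamma_0$, so in particular $\Gamma_0\subset P$ and $\Gamma_0\subset Q$.

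Next I would read off the intersection bound from the inductive analysis carried out just above. That analysis shows the process terminates after $m\le\kappa_{\mathrm{c}}(\Sigma)$ steps, that at each step at most $D$ flips are performed before a curve becomes addable, and that the intersection of the arcs at stage $k$ with the fixed multiarc $\A_0$ stays bounded by a quantity $I_k$ depending only on the topology of $\Sigma$ and on $k$; consequently each curve $\alpha_k$ that is added meets $\A_0$ at most $I_m$ times (letting $I_m$ dominate all the $I_k$). Since the curves of $\Gamma_0$ are disjoint from $\A_0$, summing over the at most $m$ added curves gives $i(P,\A_0)\le m\,I_m$, and likewise $i(Q,\A_0)\le m\,I_m$; set $K:=m\,I_m$, a quantity depending only on the topology of $\Sigma$ and on $I_m$.

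Applying the preceding lemma with $\Gamma=\Gamma_0$, $\A=\A_0$ and this value of $K$ then gives $d_{\mathcal{P}}(P,Q)\le R(K)$ for a function $R$ of the topology of $\Sigma$ and of $K$; since $m\le\kappa_{\mathrm{c}}(\Sigma)$ is itself determined by the topology, this bound may be rewritten as a function $R(I_m)$ of $I_m$ (and the topology), as claimed.

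I do not expect a genuine obstacle here: all the substantive content — that curves added by $\psi$ have intersection with $\A_0$ controlled purely by topology, and the finiteness argument bounding the pants distance between two pants decompositions with bounded intersection data — has already been established. The only points needing a word of care are checking that $\Gamma_0$ survives as a sub-multicurve throughout the construction, so that the containment hypothesis of the lemma is legitimately available, and tracking the factor $m$ when passing from the per-curve bound $I_m$ to the bound on the total intersection $i(P,\A_0)$.
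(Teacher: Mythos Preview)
Your proposal is correct and follows exactly the paper's approach: verify that both $P$ and $Q$ contain $\Gamma_0$, invoke the topological bound $I_m$ on intersections with $\A_0$ established just before, and feed this into the preceding finiteness lemma. If anything you are more careful than the paper, which glosses over the containment $\Gamma_0\subset P,Q$ and conflates the per-curve bound $I_m$ with the total intersection bound (your factor of $m$), absorbing the discrepancy into the dependence of $R$ on topology.
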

\qedsymbol

Now consider $H$ and $H'$ at distance $1$ in $\hS$. If they are related by a flip, by the argument given above, then their images by $\psi$ are at a bounded distance apart, where this distance depends only the topology of $\Sigma$. If they are related by a curve addition, then their images can be chosen to be identical. As such, we proved the following:

\begin{lemma}\label{lem:psidist}
There exists a constant $C_2=C_2(\Sigma)$ that depends only on the topology of $\Sigma$ such that for all $H,H'\in \hS$ we have
$$
d_{\mathcal{P}}(\psi(H), \psi(H')) \leq C_2 \,d_{\mathcal{H}}(H,H').
$$
\end{lemma}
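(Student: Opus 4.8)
The plan is to reduce to the case $d_{\mathcal H}(H,H')=1$ and then concatenate along a geodesic of $\hS$. First I would fix, once and for all, a choice of the (a priori loosely defined) map $\psi$. By the Proposition just proved, any two legitimate outputs of the $\psi$-construction on the \emph{same} hexagon decomposition differ by at most $R(I_m)$ in $\pS$; hence it is enough to bound $d_{\mathcal P}(\psi(H),\psi(H'))$ for \emph{some} valid choices of images, since switching to any other choices changes the estimate by at most an additive $2R(I_m)$, which can be absorbed into the final constant.

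For the base case, take $H$ and $H'$ adjacent in $\hS$ and split according to the type of edge. If $H=(\Gamma,\A)$ and $H'=(\Gamma,\A')$ are related by a flip, then $\A$ and $\A'$ differ in a single arc, so $i(\A,\A')$ is bounded above by a constant depending only on the topology of $\Sigma$ (this is the quantity $I_1$ from the discussion above, with an explicit value available from \cite[Corollary 2.18]{Disarlo-Parlier}). Running the curve-addition process on $H'$ produces a pants decomposition $P'\supseteq\Gamma$ with $i(P',\A')$ bounded; by the broken-path observation used above, disjointness of $P'$ from $\Gamma$ together with the bound on $i(\A,\A')$ forces $i(P',\A)$ to be bounded as well. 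Taking $P=\psi(H)$, both $P$ and $P'$ contain $\Gamma$ and meet $\A$ a bounded number of times, so the preceding Lemma gives $d_{\mathcal P}(P,P')\le R(K)$ for a topological constant $K$. If instead $H$ and $H'$ are related by a curve addition, with $H'=(\Gamma\cup\alpha,\A')$ obtained from $H=(\Gamma,\A)$ by adding a compatible curve $\alpha$, then I would simply choose the process computing $\psi(H)$ to add $\alpha$ as its first step: no preliminary flips are needed, since $\alpha$ is already compatible with $H$, after which the two processes coincide and we may take $\psi(H)=\psi(H')$.

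Finally, let $H=H_0,H_1,\dots,H_n=H'$ be a geodesic in $\hS$, so $n=d_{\mathcal H}(H,H')$. By the base case, consecutive images satisfy $d_{\mathcal P}(\psi(H_i),\psi(H_{i+1}))\le c_0$ for a single topological constant $c_0$, namely the maximum of the flip bound $R(K)$ and the curve-addition bound, plus the $2R(I_m)$ of slack needed to pass between legitimate choices of $\psi$ at each vertex. Summing over the $n$ edges and using the triangle inequality in $\pS$ yields $d_{\mathcal P}(\psi(H),\psi(H'))\le c_0\, n = C_2\, d_{\mathcal H}(H,H')$ with $C_2=c_0$.

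The hard part is the flip case: one must check that the single-flip bound on $i(\A,\A')$ genuinely propagates to a uniform bound on the intersection of $\A$ with the \emph{output} $\psi(H')$ of the entire curve-addition process, not merely with the intermediate multiarc $\A'$. This is precisely what the ``broken path'' observation preceding the Lemma is for --- an arc of a newly added curve crosses each complementary hexagon at most once and can be homotoped onto the hexagon boundary --- which lets intersection bounds be transported between $\A$ and $\A'$ and then be kept under control through all $m\le\kappa_{\mathrm c}(\Sigma)$ stages of the construction. Everything else is bookkeeping with the constants $D$, $I_m$ and $R(\param)$ already in hand.
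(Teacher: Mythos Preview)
Your proposal is correct and follows essentially the same route as the paper. The paper's proof is terse—it reduces to adjacent vertices, notes that for a curve addition the images ``can be chosen to be identical'' (exactly your argument of adding $\alpha$ first), and for a flip appeals to ``the argument given above,'' meaning the intersection-propagation and the finiteness Lemma, which is precisely what you spell out. Your additional bookkeeping about the $2R(I_m)$ slack for different legitimate choices of $\psi$ is a welcome clarification but not a departure in method.
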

\qedsymbol

Finally, we observe that $\phi$ and $\psi$ are quasi-inverses. Indeed, we have $\psi(\phi(P))=P$ for all $P\in \pS$, and $\phi(\psi(H))$ is at most a constant distance from $H$ (where the constant only depends on topology). Putting these results together, we have now shown that the maps $\phi$ and $\psi$ are quasi-isometries, hence showing Theorem \ref{thm:quasipants}.

\section{The geometric hexagon graph}\label{sec:ghg}

By adding additional data to each hexagon decomposition, we construct a new graph which will give us a quasi-isometric model for the mapping class group. The graph will be associated to a (fixed) hyperbolic metric $X$ which is homeomorphic to $\Sigma$, and denoted $\hX$.

\subsection{Vertices and edges.} For this graph, we require that curves have an orientation. We fix an orientation on each curve (it can be arbitrary).

The vertices of $\hX$ are hexagon decompositions as above, but with an integer (called a weight) prescribed to each curve in the multicurve. Thus precisely, vertices are weighted hexagon decompositions $H= (\Gamma^w,\A)$, where $\Gamma^w=(\Gamma,w)$, that is a multicurve $\Gamma=(\gamma_1,\hdots,\gamma_m)$ with a set of weights $w\in \Z^m$, and $\A$ a maximal multiarc on $X\setminus \Gamma$.

Note that if a curve $\alpha$, with an orientation, is given weight $k$, this corresponds to $\alpha^{-1}$ (that is $\alpha$ with the opposite orientation) with given weight $-k$. In other words, we can think of each un oriented curve as an equivalence class, and $\alpha,k$ as a representative. Finally, weight $0$ is not particular, other than it is the only weight for which $(\alpha,0)$ and $(\alpha^{-1},0)$ correspond to the same class.

Edge relations are defined as before, with the following caveats:
\begin{enumerate}[i)]
\item Let $w, w' \in \Z^m$ differ by only one coordinate that differs by $1$ (expressed otherwise, $|| {w-w'} ||_1 = 1$) then there is an edge between $(\Gamma^w, \A)$ and $(\Gamma^{w'},\A)$. In other words, changing a single weight by $1$ induces an edge.\\
\item We need to define how to associate a weight to an added curve. We explain this below.
\end{enumerate}

{\it The weight of an added curve.} This is where we use the hyperbolic metric $X$.

Let $\alpha$ be compatible to a given hexagon decomposition $\left(\Gamma_w, \calA\right)$, that is if $\alpha \cap \Gamma = \emptyset$ and $i(\alpha, a)\leq 1$ for every arc $a\in \calA$.

Note that all arcs $a \in \calA$ with $i(a,\alpha)=1$ split into two arcs $a_1,a_2$ in $X\setminus \{\Gamma \cup \alpha\}$. These arcs are obtained by surgering $a$ at its \emph{geometric} intersection point with $\alpha$ and let $a_1$ and $a_2$ are the resulting (homotopy classes of) arcs. Both $a_1$ and $a_2$ have exactly one of their endpoints on $\alpha$.

We can equivalently think of the homotopy classes of arcs $a_1$ and $a_2$ as being realized by unique orthogeodesics (using the hyperbolic metric $X$). This gives rise to a converse construction of $a$, starting from the arcs $a_1$ and $a_2$. To begin with, we give $\alpha$ an orientation. We orient $a_1$ so that it ends on $\alpha$, and $a_2$ so that it begins on $\alpha$. The (homotopy class of) arc $a$ is obtained via the orthogeodesics $a_1$ and $a_2$ by first following $a_1$, then following $\alpha$ for a certain length (which we denote by $t_\alpha(a)$) and then by following $a_2$ (see Figure \ref{fig:splittingarc}).

\begin{figure}[htbp]
\leavevmode \SetLabels
\L(.39*.78) $a$\\
\L(.36*.45) $\alpha$\\
\L(.683*.7) $a_1$\\
\L(.61*.27) $a_2$\\
\L(.69*.45) $t_\alpha(a)$\\
\endSetLabels
\begin{center}
\AffixLabels{\centerline{\includegraphics[width=9cm]{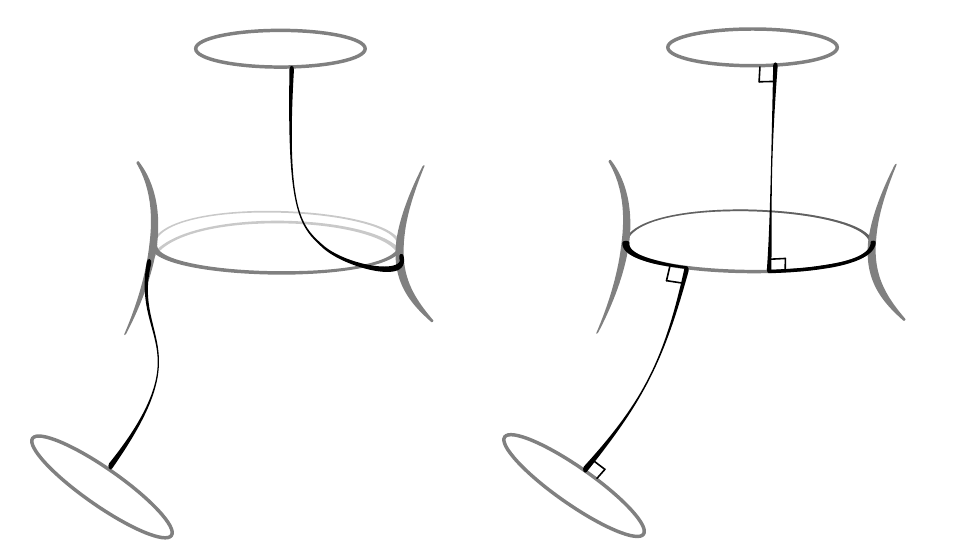}}}
\caption{An arc $a$ and its representation by a concatenation of arcs}
\label{fig:splittingarc}
\end{center}
\end{figure}

The result is an oriented arc whose unoriented version is $a$. Note that the quantity $t_\alpha(a)$ does not depend on our choice of $a_1$ and $a_2$, but only on the orientation of $\alpha$.

Let $\alpha$ be a compatible curve that we add to $(\Gamma^w, \calA)$.  This results in $\left((\Gamma')^{w'},\calA'\right)$ where:

\begin{enumerate}[i)]
\item $\Gamma'= \Gamma\cup \{\alpha\}$\\
\item $\calA'$ consists of all arcs of $\calA$ that don't intersect $\alpha$, to which we add all arcs of $\calA$ split by $\alpha$.\\
\item The weights of the curves in $\Gamma'$ are defined as follows:
\end{enumerate}

For all curves that were previously in $\Gamma$ the weights remain unchanged.

The weight of $\alpha$ relative to an arc $a$ is defined as
$$
w(\alpha,a)= \{ \lfloor t_{\alpha}(a)/{\ell_{X}(\alpha)}\rfloor\}.
$$
Here, $\ell_{X}(\alpha)$ is the length of the unique closed geodesic freely homotopic to $\alpha$ for the hyperbolic metric $X$, and $t_\alpha(a)$ is defined above.

In order to make the weight independent of the choice of arc, we set
$$
w(\alpha)=\max_{a\in \calA \setminus \calA'}\{ \lfloor t_{\alpha}(a)/{\ell_{X}(\alpha)}\rfloor\}
$$
where $a$ runs over all arcs split by $\alpha$.

Roughly speaking, the weight, associated to a choice of arc $a$, is defined as the (whole) twist parameter (or power) of $\alpha$ necessary to obtain the arc $a$ by concatenating $a_1$ and $a_2$ with a power of $\alpha$. As there are multiple arcs to choose from, we take the maximum value to avoid any ambiguity. This is an arbitrary choice and we could have chosen the minimum for instance. In fact in the next lemma, we show that for any two arcs, the values obtained will differ by at most one.

\begin{lemma}\label{lem:curveadding2}
Weights of arcs differ by at most 1.
\end{lemma}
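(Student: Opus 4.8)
The plan is to read $w(\alpha,a)$ as a relative twisting coordinate of $a$ around $\alpha$, and then to use crucially that $a$ and $b$ are \emph{disjoint}: two simple arcs each crossing $\alpha$ exactly once cannot wrap around $\alpha$ in two appreciably different ways without intersecting.

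\emph{Step 1 (reformulation).} By construction $w(\alpha,a)=\lfloor t_\alpha(a)/\ell_X(\alpha)\rfloor$ is the integer number of full turns around $\alpha$ made by the reconstructing path $a_1\cdot\alpha|_{[0,t_\alpha(a)]}\cdot a_2$ which recovers $a$ from the orthogeodesic representatives of its two halves; in particular $t_\alpha(\twist_\alpha(a))=t_\alpha(a)+\ell_X(\alpha)$, so this is a genuine twisting coordinate. Hence it suffices to prove $|\,t_\alpha(a)-t_\alpha(b)\,|<\ell_X(\alpha)$ for any $a,b\in\calA$ with $i(a,\alpha)=i(b,\alpha)=1$, as this forces the two floors to differ by at most one.

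\emph{Step 2 (localize to a collar).} Realize $\alpha$ by its closed geodesic and fix a thin embedded annular neighbourhood $N$ of $\alpha$, disjoint from $\Gamma$ and thin enough that each arc of $\calA$ meets $N$ in at most one essential spanning arc (legitimate since $i(\cdot,\alpha)\le1$ on $\calA$). Replace $a$ (resp.\ $b$) by the reconstructing path obtained from its two orthogeodesic halves joined along $\alpha$ and pushed slightly off $\alpha$ into $N$; call these $P_a$ and $P_b$. Since the orthogeodesic halves meet $\alpha$ perpendicularly, their portions inside $N$ carry no winding, so $P_a$ and $P_b$ are spanning arcs of $N$ with horizontal displacements exactly $t_\alpha(a)/\ell_X(\alpha)$ and $t_\alpha(b)/\ell_X(\alpha)$. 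The four orthogeodesic halves $a_1,a_2,b_1,b_2$ form a multiarc in $X\setminus(\Gamma\cup\alpha)$ (they are the surgeries of the disjoint arcs $a,b$), hence are mutually disjoint and meet $\alpha$ in a prescribed cyclic order; consequently, outside $N$ the paths $P_a,P_b$ are already disjoint, and since $P_a$ is homotopic to $a$ and $P_b$ to $b$ with $a\cap b=\emptyset$, we may isotope $P_a,P_b$ within $N$ (keeping their ends on $\partial N$ in their cyclic positions) to be disjoint without changing their horizontal displacements.

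\emph{Step 3 (annular estimate).} Cut $N$ open along $P_a$: the result is a disc two of whose sides are copies of $P_a$, and $P_b$ survives as an embedded arc joining the two remaining (``horizontal'') sides with both of its endpoints in their \emph{interiors}, because $P_b$ is disjoint from $P_a$. An embedded arc of a rectangle joining the interiors of its two horizontal sides has horizontal displacement strictly smaller in absolute value than the width of the rectangle, i.e.\ than $\ell_X(\alpha)$. Therefore $|\,t_\alpha(a)-t_\alpha(b)\,|<\ell_X(\alpha)$, and so $|w(\alpha,a)-w(\alpha,b)|=\bigl|\lfloor t_\alpha(a)/\ell_X(\alpha)\rfloor-\lfloor t_\alpha(b)/\ell_X(\alpha)\rfloor\bigr|\le1$.

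The step I expect to be the real obstacle is Step 2: ensuring that when $P_a$ and $P_b$ are put in a disjoint position the relevant displacements are honestly $t_\alpha(a)/\ell_X(\alpha)$ and $t_\alpha(b)/\ell_X(\alpha)$, with no extra period leaking in from the two \emph{different} orthogeodesic framings used to define $t_\alpha(a)$ and $t_\alpha(b)$. The point that makes this work is that $\alpha$ meets each of $a_1,a_2,b_1,b_2$ perpendicularly and these four orthogeodesics are pairwise disjoint, which rigidifies the configuration in the collar $N$ enough to pin the discrepancy strictly below one full turn; carrying out this bookkeeping carefully is where the work lies.
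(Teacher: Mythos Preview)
Your proposal is correct and follows essentially the same idea as the paper's proof: disjointness of $a$ and $b$ forces $|t_\alpha(a)-t_\alpha(b)|\le \ell_X(\alpha)$, from which the bound on the floors is immediate. The paper dispatches this in two lines (``otherwise one arc would wrap at least one more time around $\alpha$ than the other, and the arcs would intersect''), whereas your Steps~2--3 unpack exactly that sentence via a collar neighbourhood and a cut-open-to-rectangle argument; the worry you flag in Step~2 is real but is precisely what the perpendicularity and disjointness of the four orthogeodesic halves buy you, and the paper simply takes this as evident.
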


\begin{proof} Let $a,b$ be disjoint arcs in $\mathcal{A}$ and $\alpha$ a simple closed curve that intersects both $a$ and $b$ once.

As $a$ and $b$ are disjoint, it follows that $t_{\alpha}(a)$ and $t_{\alpha}(b)$ cannot differ by more than $\ell_X(\alpha)$. Otherwise, one of the arcs would wrap (at least) one more time around $\alpha$ than the other, and the arcs would intersect. Hence
\[|t_{\alpha}(a)- t_{\alpha}(b)| \leq \ell_X(\alpha)\]
and then by definition
$$ | w(\alpha,a) - w(\alpha,b) | = \frac{ |t_{\alpha}(a)- t_{\alpha}(b)| }{\ell_X(\alpha)} \leq 1$$
as claimed.
\end{proof}

We finish this subsection with an upper bound on the degree of vertices.

\begin{theorem}\label{thm:valency}
Any vertex in $\hX$ is of finite valency $F$ where $F$ depends only on the topology of $\Sigma$.
\end{theorem}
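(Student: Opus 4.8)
The plan is to count, for an arbitrary vertex $H=(\Gamma^w,\A)$ of $\hX$, its neighbours of each of the four possible kinds — flip, single weight change, curve addition, curve removal — and to bound each count by a quantity depending only on the topology of $\Sigma$. Three of these are immediate. A flip changes a single arc of $\A$ and leaves $\Gamma^w$ untouched, so there are at most $\kappa_{\mathrm{a}}$ flip-neighbours. A single weight change alters one of the $|\Gamma|\le\kappa_{\mathrm{c}}(\Sigma)$ entries of $w$ by $\pm1$, giving at most $2\,\kappa_{\mathrm{c}}(\Sigma)$ such neighbours. A curve addition is prescribed by the choice of a compatible curve, of which there are at most $2^{\kappa_{\mathrm{a}}}$ by Lemma~\ref{lem:curveadding1}, and the weight of the added curve is then completely determined by the metric $X$; so there are at most $2^{\kappa_{\mathrm{a}}}$ curve-addition neighbours.

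The remaining case, curve removal, is exactly the one responsible for infinite valency in the topological graph $\hS$, and it is where the weights are used. Fix $H'=((\Gamma')^{w'},\A')$; a curve-removal neighbour of $H'$ is a vertex $H$ such that adding some compatible curve $\alpha$ to $H$ yields $H'$. Any such $\alpha$ must lie in $\Gamma'$, so there are at most $\kappa_{\mathrm{c}}(\Sigma)$ of them, and I would fix $\alpha\in\Gamma'$ and bound the number of $H$ with this property. Since adding a curve leaves the multicurve $\Gamma'\setminus\{\alpha\}$ and its weights unchanged, the underlying multicurve of $H$ is $\Gamma'\setminus\{\alpha\}$ and all of its weights are forced to be those of $H'$; hence $H$ is determined by its multiarc $\A$ on $\Sigma\setminus(\Gamma'\setminus\{\alpha\})$. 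The arcs of $\A$ disjoint from $\alpha$ are themselves forced, being the arcs of $\A'$ disjoint from $\alpha$, so the only freedom lies in the arcs of $\A$ crossing $\alpha$: each is obtained from a pair of arcs of $\A'$ (determined by $H'$ and $\alpha$) by concatenating them along $\alpha$, and any two choices for the same pair differ by a power of the Dehn twist $T_\alpha$, parametrised by an integer $n_a$. The Remark following the definition of $\hS$ says precisely that all of these choices produce the same topological $H'$; what controls the count is, first, that disjointness of the resulting crossing arcs forces the $n_a$ to differ from one another by at most a universal constant — this is the estimate in the proof of Lemma~\ref{lem:curveadding2}, that disjoint arcs have $t_\alpha$-parameters within $\ell_X(\alpha)$ of each other.

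To pin down the $n_a$, I would then invoke the metric $X$. Replacing the crossing arc associated to a given pair by its $T_\alpha^{\,n_a}$-image changes $t_\alpha$ of that arc by $n_a\,\ell_X(\alpha)$, hence changes $\lfloor t_\alpha/\ell_X(\alpha)\rfloor$ by $n_a$; so the weight $w(\alpha)=\max_a\lfloor t_\alpha(a)/\ell_X(\alpha)\rfloor$ that the curve-addition rule assigns to $\alpha$ is obtained from the tuple $(n_a)$ by adding the corresponding maximum. For this weight to equal the prescribed $w'(\alpha)$, some $n_a$ must take a value determined by $w'(\alpha)$ and the fixed data of $H'$; combined with the bounded-spread constraint of the previous paragraph, this confines $(n_a)$ to a set whose size is bounded in terms of the topology of $\Sigma$. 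Summing over the at most $\kappa_{\mathrm{c}}(\Sigma)$ choices of $\alpha$ and adding the contributions of the three other move types yields the bound $F=F(\Sigma)$. The main obstacle is the middle paragraph, namely the bounded-spread statement for the twist parameters $n_a$: one must extract from the proof of Lemma~\ref{lem:curveadding2} a version controlling all crossing arcs simultaneously, so that the residual ambiguity left after fixing the weight of $\alpha$ is genuinely topological; the rest is bookkeeping.
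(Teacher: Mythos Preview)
Your proposal is correct and follows essentially the same four-case breakdown as the paper, with the same key insight: the only non-trivial case is curve removal, and there the prescribed weight on $\alpha$ pins down the twist ambiguity via the estimate in Lemma~\ref{lem:curveadding2}. The bookkeeping differs slightly---the paper overcounts by allowing $|\A|^2$ choices for the pair $(a_1,a_2)$ and then at most $2$ choices for the connecting segment along $\alpha$, giving $\binom{2|\A|^2}{N}$ possible preimage multiarcs, whereas you parametrise by a twist-tuple $(n_a)$ of bounded spread; your version implicitly assumes the pairing of half-arcs across $\alpha$ is determined, which is not quite true in general, but the number of such pairings is itself bounded by the topology, so this is absorbed into the bookkeeping you flag at the end.
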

\begin{proof}
Let $(\Gamma^w, \calA)$ be a vertex. Edges leaving this vertex correspond can correspond to different types of elementary moves:
\begin{enumerate}[i)]
\item Flipping arcs of $\mathcal{A}$, contributing at most $|\mathcal{A}|$ edges.

\item Each weight can change by $\pm 1$, thus contributing $2 |\Gamma|$ edges.

\item Curves can be added, but by Lemma \ref{lem:curveadding1} there are most $2^{|\calA|}$ such curves.

\item Curve removal: this is the part which requires some work. We need to estimate how many different different vertices can result in $(\Gamma^w, \calA)$ after a curve addition.
\end{enumerate}

We consider $\alpha \in \Gamma$, and we look at how it might have appeared under a curve addition.

Note there are possibly an infinite number of hexagon decompositions of $X\setminus\left(\Gamma\setminus\alpha\right)$ that would topologically result in $(\Gamma,\calA)$ by adding $\alpha$ but only finitely many would result in $(\Gamma^w,\calA)$ with the correct weight along $\alpha$ as we now explain.

Any arc in such a hexagon decomposition must have been obtained by concatenating an arc of $\calA$, say $a_1$, with an arc $c$ belonging to $\alpha$, and then another arc of $\calA$, say $a_2$. There are at most $|\calA|$ choices for both $a_1$ and $a_2$, thus at most $|\calA|^2$ choices for the pair. Now because the weight of $\alpha$ is known, there are at most $2$ choices for the arc $c$ (the arc must wrap the prescribed number of times around $\alpha$, or possibly once less following Lemma \ref{lem:curveadding2}).

Thus there are at most $2|\calA|^2$ choices for each arc. The full hexagon decomposition requires at most $N$ new arcs where
$$N=\kappa_{\mathrm{a}}(X\setminus\left(\Gamma\setminus\alpha\right) ).$$
There are at most
$${2|\calA|^2\choose N} $$
choices.
This was for a given choice of $\alpha\in \Gamma$. Multiplying by $|\Gamma|$, which is bounded above by the curve complexity of $X$, completes the upper bound on possible vertices that are related to  $(\Gamma^w, \calA)$ by a curve removal.

The quantity $F$ is the sum of all of these upper bounds.
\end{proof}

\subsection{$\hX$ is a model for the mapping class group}
With these basic properties in hand, we will now be able to show the following.

\begin{theorem}\label{thm:qimcg} Let $X$ be a closed surface with hyperbolic metric. Then the mapping class group of $X$ acts on $\hX$ by quasi-isometries and with finite quotient. As such, $\hX$ is a quasi-model for the mapping class group.
\end{theorem}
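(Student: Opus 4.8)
The plan is to deduce the statement from the Milnor--Schwartz lemma in the form valid for quasi-actions: if a group acts on a proper geodesic metric space by uniform quasi-isometries, metrically properly, and coboundedly (with finite quotient), then the group is finitely generated and quasi-isometric to the space via any orbit map. So I would organize the proof around verifying these four hypotheses for the natural action of $\Mod(X)$ on $\hX$. The first --- that $\hX$ is a proper geodesic space --- is essentially in hand. Connectedness follows from Theorem~\ref{thm:topoconnect}: given two vertices of $\hX$, their underlying hexagon decompositions are joined by a path in $\hS$, and each move of $\hS$ can be followed in $\hX$ (flips lift directly; a curve addition lifts once one records the $X$-weight of the new curve; a curve removal lifts after first bringing the weight on the curve to be removed to the prescribed value by weight-change edges), producing a path to \emph{some} weighting of the second vertex, which is then connected to it by at most $\|w-w'\|_1$ weight-change edges. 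All edges have unit length, so $\hX$ is geodesic, and Theorem~\ref{thm:valency} gives uniformly bounded valency, hence properness.

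Next I would pin down the coarse action and check it is by uniform quasi-isometries. A mapping class $g$ acts on the underlying hexagon decomposition in the usual way; the content is the bookkeeping of weights, which I would handle via the correspondence between weighted hexagon decompositions and combinatorial twisting data (this is also what makes Dehn twists shift weights, used below). Since each $g$ is a bijection of the vertex set, it suffices to bound $d_{\hX}(gH,gH')$ uniformly for adjacent $H,H'$. Flip edges map to flip edges and weight-change edges to weight-change edges, so the only real point is curve addition: adding a compatible $\alpha$ to $H$ versus adding $g\alpha$ to $gH$ produces the same picture except in the single weight along $g\alpha$, so I need a bound $|w(g\alpha)-w(\alpha)|\le C_0$ independent of $g$ and $\alpha$. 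This is the coarse $\Mod(X)$-equivariance of twisting: the quantity $\lfloor t_\alpha(a)/\ell_X(\alpha)\rfloor$ defining $w(\alpha)$ agrees up to a universal additive error with the topological twisting coordinate of $a$ about $\alpha$ (compare Lemma~\ref{lem:curveadding2}), and the latter is coarsely $\Mod(X)$-equivariant with a universal constant by the standard theory of annular projections \cite{Masur-MinskyII}; so curve-addition edges go to paths of length at most $C_0+1$ and each $g$ is a $(C_0+1,0)$-quasi-isometry. Metric properness is then routine: by local finiteness the ball $B(H_0,R)$ is finite, so $\{g:d_{\hX}(gH_0,H_0)\le R\}$ is a finite union of cosets of $\operatorname{Stab}(H_0)$, and $\operatorname{Stab}(H_0)$ is finite because a mapping class fixing a hexagon decomposition together with its weights is a symmetry of a rigid combinatorial object, hence of finite order.

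It remains to establish coboundedness. There are only finitely many $\Mod(X)$-orbits of topological hexagon decompositions $(\Gamma,\A)$ (finitely many combinatorial types, exactly as for pants decompositions enriched by a maximal multiarc), so I fix representatives with all weights zero, say $H^{(1)},\dots,H^{(N)}$. Given an arbitrary vertex $(\Gamma^w,\A)$, write $(\Gamma,\A)=g\cdot(\Gamma^{(j)},\A^{(j)})$; then I must move the \emph{a priori} unbounded weight $w$ to a bounded one by the group, up to bounded distance in $\hX$. The mechanism is that for $\gamma\in\Gamma$ the Dehn twist $T_\gamma$ acts on $\hX$ coarsely as the unit translation of the $\gamma$-weight coordinate: removing $\gamma$ and re-adding it along a twist-related reconnection shows $T_\gamma^{\pm1}\cdot(\Gamma^w,\A)$ lies within a bounded distance of $(\Gamma^w,\A)$ with its $\gamma$-weight changed by $\mp1$, where the bound is controlled using Lemmas~\ref{lem:curveadding1} and~\ref{lem:curveadding2}. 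Hence $\langle T_\gamma:\gamma\in\Gamma\rangle$ carries $(\Gamma^w,\A)$ within distance $O(|\Gamma|)$ of a bounded-weight vertex over $(\Gamma,\A)$, hence within a bounded distance of $g\cdot H^{(j)}$; so $\Mod(X)\cdot B=\hX$ with $B$ a bounded neighbourhood of $\{H^{(1)},\dots,H^{(N)}\}$. With all four hypotheses verified, the quasi-action Milnor--Schwartz lemma gives that $\Mod(X)$ is finitely generated and the orbit map is a quasi-isometry $\Mod(X)\to\hX$, which is the assertion; combined with \cite{Masur-MinskyII} it also identifies $\hX$ up to quasi-isometry with the marking graph.

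The step I expect to be the main obstacle is the last one, and specifically the claim that Dehn twists realize the weight translations up to bounded error: this is where the integer weights and the arc data genuinely interact, it is precisely what forces the hyperbolic metric $X$ into the definition of the weights, and it is the feature absent from the topological graph $\hS$ (whose infinite-valency vertices already break properness and whose weightless vertices resist collapse to finitely many orbits). A secondary technical point is the uniform bound on $|w(g\alpha)-w(\alpha)|$ in the second step, but this reduces to a standard coarse-equivariance property rather than requiring new work.
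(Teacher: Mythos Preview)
Your overall strategy coincides with the paper's: invoke the quasi-action version of Milnor--Schwarz after checking that $\hX$ is a proper geodesic space, that the action is by uniform quasi-isometries, that it is cobounded (via Dehn twists realising weight shifts, so that the quotient is that of $\hS$), and that it is proper (via finite valency and finite stabilisers). The organisation, the connectedness argument, and the coboundedness and properness steps are essentially those of the paper, stated in somewhat more detail.

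There is, however, a genuine gap in your treatment of the quasi-isometric action. You never actually \emph{define} how a mapping class $g$ transforms weights, and the placeholder you use is internally inconsistent: in the curve-addition step you implicitly treat weights as $g$-invariant numbers (so that the discrepancy to control is $|w(g\alpha)-w(\alpha)|$, comparing the geometric weight of $g\alpha$ added to $gH$ with the old weight $w(\alpha)$ carried over to $gH'$), while in the coboundedness step you require Dehn twists to \emph{shift} weights. Worse, the claimed bound $|w(g\alpha)-w(\alpha)|\le C_0$ is false. Take $g=T_\alpha^{\,n}$; then $g\alpha=\alpha$ and $g\Gamma=\Gamma$, but the arcs of $gH$ are $T_\alpha^{\,n}\A$, and for any $a\in\A$ with $i(a,\alpha)=1$ one has $t_\alpha(T_\alpha^{\,n}a)\approx t_\alpha(a)+n\,\ell_X(\alpha)$, so the weight assigned to $\alpha$ when added to $gH$ exceeds $w(\alpha)$ by roughly $n$. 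The appeal to Masur--Minsky annular projections does not rescue this: $d_\alpha(\cdot,\cdot)$ is a \emph{relative} twisting between two transversals and is exactly $\Mod$-equivariant, whereas $w(\alpha)$ is an \emph{absolute} twisting measured against the orthogeodesic reference frame supplied by $X$; that reference is not $\Mod$-invariant, and this non-invariance is exactly what allows weights to detect Dehn twists.

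The paper closes this gap by defining the action on weights through an auxiliary transversal arc: given $\gamma\in\Gamma$ of weight $k$, pick arcs $a_1,a_2\in\A$ landing on the two sides of $\gamma$, form the arc $a$ obtained by concatenating $a_1$, then $k$ turns along $\gamma$, then $a_2$, and declare the weight of $\varphi(\gamma)$ in $\varphi(H)$ to be $\lfloor t_{\varphi(\gamma)}(\varphi(a))/\ell_X(\varphi(\gamma))\rfloor$. Different admissible choices of $a_1,a_2$ yield arcs $a$ with pairwise intersection at most one, so by the mechanism of Lemma~\ref{lem:curveadding2} the resulting weights differ by at most $2$; this makes the action coarsely well defined. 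For an edge $x\to y$ (flip or curve addition), the arcs available in $x$ and in $y$ to compute a given weight again have mutually bounded intersection, so the weights the action assigns in $\varphi(x)$ and $\varphi(y)$ are uniformly close. Once this mechanism replaces your $|w(g\alpha)-w(\alpha)|\le C_0$ claim, the rest of your outline goes through.
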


To prove this theorem we will use the following lemmata, the first of which is a version of the Milnor-Schwarz lemma for quasi-isometric actions. Although it is not as used as its isometric action counterpart, it is certainly well-known to experts. For a proof, see for instance \cite[Theorem 4.40]{CHT}.  

\begin{lemma}\label{lem:MS}[Milnor-Schwarz]
Let $G$ be a group and $X$ a geodesic metric space on which $G$ acts with $(\tilde K, \tilde C)$ uniform quasi-isometries. Assume that the action is
properly discontinuous and cocompact. Then, $G$ is finitely generated and there are constants $K\geq 1$, $C\geq 0$ such that $G$ is $(K,C)$-quasi-isometric to $X$.
\end{lemma}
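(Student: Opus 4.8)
The plan is to adapt the classical Milnor--Schwarz argument to the quasi-action setting, the only genuinely new subtlety being the control of multiplicative distortion. First I would unpack the hypotheses into metric form: fix a basepoint $x_0 \in X$, write $\phi_g$ for the $(\tilde K, \tilde C)$-quasi-isometry by which $g$ acts, and record the two defining bounds of a quasi-action, namely that $d(\phi_g\phi_h(x), \phi_{gh}(x))$ and $d(\phi_e(x),x)$ are bounded by a uniform constant $C'$. Cocompactness means the orbit $\{\phi_g(x_0) : g\in G\}$ is $R$-dense for some $R$, and proper discontinuity means that for every $r$ the set $\{g : \phi_g(x_0)\in B(x_0,r)\}$ is finite. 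The object to analyse throughout is the orbit map $q\colon G\to X$, $q(g)=\phi_g(x_0)$; the goal is to show $q$ is a quasi-isometry once $G$ carries a word metric $d_S$ for a suitable finite $S$.

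Next I would construct the generating set and prove the lower quasi-isometry bound by path subdivision. Given $g,h\in G$, set $k=g^{-1}h$, take a geodesic in $X$ from $x_0$ to $\phi_k(x_0)$, and subdivide it into $N\le d(x_0,\phi_k(x_0))/R + 1$ pieces of length at most $R$. Using $R$-density I choose group elements $g_0=e, g_1,\dots,g_N=k$ whose orbit points track the subdivision points, so that consecutive orbit points satisfy $d(\phi_{g_i}(x_0),\phi_{g_{i+1}}(x_0))\le 3R$. Applying the quasi-inverse estimate for $\phi_{g_i}$ together with the composition bound $C'$ shows $d(x_0,\phi_{g_i^{-1}g_{i+1}}(x_0))\le M_0$ for an explicit $M_0=M_0(\tilde K,\tilde C,C',R)$. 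Declaring $S=\{s\in G : d(x_0,\phi_s(x_0))\le M_0\}$, proper discontinuity makes $S$ finite, the factorization $k=\prod_i (g_i^{-1}g_{i+1})$ shows $S$ generates $G$ (so $G$ is finitely generated), and it yields $|k|_S\le N$. Combined with the quasi-isometry lower bound for $\phi_g$ this gives $d_S(g,h)\le \lambda\, d(q(g),q(h)) + \mu$.

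For the upper bound I would telescope along a geodesic word rather than recurse on a single map, which is exactly where uniformity of the quasi-action constants is essential. Writing $g^{-1}h=s_1\cdots s_n$ with $n=d_S(g,h)$ and setting $k_i=s_1\cdots s_i$, the points $q(gk_i)$ interpolate between $q(g)$ and $q(h)$, and since $gk_{i+1}=(gk_i)s_{i+1}$ the composition bound gives $d(q(gk_i),q(gk_{i+1}))\le \tilde K\, d(x_0,\phi_{s_{i+1}}(x_0)) + \tilde C + C'$. Because every $\phi_{gk_i}$ obeys the same constants $(\tilde K,\tilde C)$, the multiplicative factor is paid once per letter instead of compounding, so summing over $i$ gives $d(q(g),q(h))\le L\, d_S(g,h)$ for a uniform $L$. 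Coarse surjectivity of $q$ is immediate from $R$-density, and together the three estimates show $q$ is a quasi-isometry; pulling the word metric back, $G$ is $(K,C)$-quasi-isometric to $X$.

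I expect the main obstacle to be precisely this upper bound: the naive estimate of $d(x_0,\phi_{s_1\cdots s_n}(x_0))$ by peeling letters off one at a time produces a factor of $\tilde K$ per letter, hence an exponential rather than linear bound in word length. The resolution---interpolating in the orbit and always applying an ambient quasi-isometry of fixed quality to two nearby points---is the one place where the hypothesis of \emph{uniform} quasi-isometry constants (rather than merely each $\phi_g$ being some quasi-isometry) does real work, and I would make sure the write-up isolates this step. Everything else is a routine, if bookkeeping-heavy, transcription of the isometric Milnor--Schwarz proof.
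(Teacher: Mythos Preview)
Your argument is correct and is the standard adaptation of the Milnor--Schwarz proof to quasi-actions; in particular you have correctly isolated the one nontrivial point, namely that the upper word-length bound must be obtained by telescoping along orbit points $q(gk_i)$ so that a single application of a $(\tilde K,\tilde C)$-quasi-isometry is made per letter, rather than by iterating the quasi-isometry bound $n$ times (which would give $\tilde K^n$). The bookkeeping with the defect constant $C'$ is handled appropriately throughout.

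There is, however, nothing to compare against: the paper does not prove this lemma. It states it as a known result and refers the reader to \cite[Theorem~4.40]{CHT} for a proof. So your write-up goes well beyond what the paper supplies; if anything it could serve as a self-contained replacement for the citation.
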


Let $\Mod(X)(=\Mod(\Sigma))$ be the mapping class group of $X$ (or of $\Sigma$). The action of the mapping class group on $\calH(\Sigma)$ is straightforward: it send multi-curves to multi-curves and multi-arcs to multi-arcs by homeomorphisms. Note that $(\Gamma,\A)\in \calH(\Sigma)$ always has infinite stabilizers because of Dehn twists along curves in $\Gamma$. The weights on vertices will prevent this from happening in $\hX$.
\begin{lemma} $\Mod(X)$ acts on $\hX$ is by quasi-isometries.
\end{lemma}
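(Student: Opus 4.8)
The plan is to show that each element $g\in\Mod(X)$ induces a map on vertices of $\hX$ which is a quasi-isometry with constants that are \emph{uniform} in $g$, and then note that this automatically extends to a quasi-isometry of the graph (with a possibly worse additive constant). The subtlety, and the reason the statement is not trivially "acts by isometries", is the weight-assignment rule for added curves: it refers to the fixed hyperbolic metric $X$ via the lengths $\ell_X(\alpha)$ and the twisting parameters $t_\alpha(a)$, and a general mapping class $g$ does \emph{not} preserve $X$. So while $g$ sends a hexagon decomposition $(\Gamma,\A)$ to the honest hexagon decomposition $(g\Gamma, g\A)$, the weight that the rule would assign to $g\alpha$ when added to $g(\Gamma\setminus\alpha,\A)$ need not equal the weight assigned to $\alpha$ when added to $(\Gamma\setminus\alpha,\A)$.

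First I would make precise the natural map $g_*\colon \hX\to\hX$ on vertices: $g_*\big((\Gamma,w),\A\big) = \big((g\Gamma, w'),\, g\A\big)$, where on each curve already present the weight is carried along unchanged (using $g$ on oriented curves in the obvious way), and where — this is the key definitional point — we simply \emph{transport} the weight rather than recompute it from $X$. This $g_*$ is visibly a bijection on vertices with inverse $(g^{-1})_*$. The first observation is that $g_*$ carries flip edges to flip edges and weight-change edges to weight-change edges exactly: these moves are purely combinatorial and commute with the homeomorphism. So the only edges that could fail to be preserved are the curve-addition (equivalently curve-removal) edges.

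Next I would control curve-addition edges. Suppose $(\Gamma^w,\A)$ and $(\Gamma'^{w'},\A')$ are joined by a curve-addition edge, adding a compatible curve $\alpha$ with the weight $w(\alpha)$ dictated by the $X$-rule. Applying $g_*$, the pair $(g\Gamma, g\A)$ and $(g\Gamma', g\A')$ differ by a topological curve addition of $g\alpha$, but the $\hX$-edge between them would require the weight of $g\alpha$ to be the $X$-computed value $w_X(g\alpha)$, whereas $g_*$ produced the transported value $w(\alpha)$. The crucial estimate is that these differ by a uniformly bounded amount:
\[
\bigl|\, w(\alpha) - w_X(g\alpha)\,\bigr| \le B(\Sigma)
\]
for a constant $B$ depending only on the topology of $\Sigma$ (and on $X$, but $X$ is fixed). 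This should follow because both quantities are, up to the $O(1)$ ambiguity already present in Lemma~\ref{lem:curveadding2}, the integer twisting power of $\alpha$ (resp.\ $g\alpha$) needed to reconstruct an arc of $\A$ (resp.\ $g\A$) from its two surgered pieces; the discrepancy is governed by how much $g$ distorts the $t_\alpha/\ell_X$ ratio, and since such a ratio is a subsurface coefficient detected by intersection numbers with a bounded multiarc, it can only change by a bounded amount under a homeomorphism — more concretely, one can bound it using the bounded geometry of the fixed surface $X$ and the combinatorics of $\A$. Granting this, a curve-addition edge of $\hX$ is mapped by $g_*$ to a pair of vertices at distance at most $2B+1$ in $\hX$ (adjust the weight on $g\alpha$ by at most $B$ steps, then do the honest curve addition, then correct back by at most $B$ more). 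Combining with the exact preservation of the other edge types, $g_*$ is $(1, 2B+1)$-coarsely Lipschitz; applying the same to $g^{-1}$ shows $g_*$ is a $(1, 2B+1)$-quasi-isometry, with constants independent of $g$. Finally, extend from vertices to the graph and record that the assignment $g\mapsto g_*$ is a group action up to bounded error, i.e.\ a \emph{quasi-action}, which is what Lemma~\ref{lem:MS} requires.

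The main obstacle is the weight-distortion estimate $|w(\alpha)-w_X(g\alpha)|\le B(\Sigma)$: making it genuinely uniform in $g$ needs an argument that the twisting parameter of a curve about an $X$-geodesic, measured through a bounded multiarc, is coarsely a topological invariant. If a fully uniform bound is hard to extract directly, a viable fallback is to observe that $w(\alpha)$ and $w_X(g\alpha)$ are both within $O(1)$ of the relative twisting $\twist_{g\alpha}(g\A, \partial)$ (an annular subsurface coefficient), which is a purely topological quantity and hence literally invariant under $g$; the hyperbolic metric then only enters through the already-bounded rounding in the definitions. Everything else — exact preservation of flips and weight moves, bijectivity of $g_*$, passing from a coarsely Lipschitz bijection to a quasi-isometry — is routine.
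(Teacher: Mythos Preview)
Your proposal has a genuine gap at the key estimate. You define $g_*$ by \emph{transporting weights unchanged} and then need $|w(\alpha)-w_X(g\alpha)|\le B(\Sigma)$ uniformly in $g$. This is false already for $g=T_\alpha^{\,n}$, the $n$-th power of the Dehn twist along the curve $\alpha$ being added. Since $T_\alpha$ fixes $\Gamma$ and $\alpha$, the image configuration is $(\Gamma,\,T_\alpha^{\,n}\A)$, and each arc of $T_\alpha^{\,n}\A$ crossing $\alpha$ has its parameter $t_\alpha(\cdot)$ shifted by $n\,\ell_X(\alpha)$; hence $w_X(g\alpha)=w(\alpha)+n$ up to $\pm 1$. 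A single curve-addition edge is therefore sent by $g_*$ to a pair of vertices at distance roughly $|n|$, so $g_*$ is not coarsely Lipschitz with constants independent of $g$. Your annular-coefficient fallback does not repair this: $w(\alpha)$ is coarsely $\twist_\alpha(\A,s_X)$ for an $X$-determined transversal $s_X$, while $w_X(g\alpha)$ is coarsely $\twist_{g\alpha}(g\A,s_X)$ with the \emph{same} $s_X$, not $g(s_X)$; there is no $g$-equivariance relating them, and their difference is exactly the unbounded term above. A further symptom of the same problem: with transported weights, every Dehn twist along a curve of $\Gamma$ fixes the vertex $((\Gamma)^w,\A)$, so your action has infinite point stabilisers, obstructing the properness needed for Lemma~\ref{lem:MS}.

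The paper avoids this by defining the action differently. It does \emph{not} transport the weight of $\alpha$. Instead it encodes the weight $k$ of $\alpha$ into an auxiliary arc $a$ built from two arcs $a_1,a_2\in\calA$ ending on $\alpha$, concatenated with $k$ turns around $\alpha$; it then pushes $a$ forward to $\varphi(a)$ and \emph{reads off} the new weight of $\varphi(\alpha)$ from the $X$-geometry of $\varphi(a)$ via the same rule used for curve addition. With this definition, a Dehn twist along $\alpha$ shifts the weight by $\pm 1$ (so stabilisers are finite), and the action is, by construction, compatible with curve-addition edges up to the bounded ambiguity coming from the choice of $a_1,a_2$. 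The remaining work is only to show that different admissible choices of auxiliary arcs produce weights differing by at most $2$, which is the analogue of Lemma~\ref{lem:curveadding2} and yields the uniform quasi-Lipschitz constant.
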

\begin{proof}
In addition to the simplicial action on $\calH(\Sigma)$, we need to define how the mapping class group acts on weights. The action we describe is a quasi-action, and we describe it at a vertex $(\Gamma^w,\calA)$ for an individual weighted curve $\alpha$ where $\alpha\in\Gamma^w$. Consider a mapping class $\varphi \in \Mod(X)$. A weight needs to be attributed to $\varphi(\alpha)$ for a homeomorphism $\varphi$. If the homeomorphism is a Dehn twist along $\alpha$, the weight changes by $\pm 1$ depending on the direction of twist and orientation of $\alpha$. More generally, we can compute the weight of the image as follows.

Let $\alpha$ be of weight $k\in \Z$. We consider a pair of disjoint simple arcs $a_1,a_2\in \calA$ with exactly one endpoint on either side of $\alpha$. If no such pair exists, by performing a bounded number of flips (as in the proof of Lemma \ref{lem:gotopants}), one can ensure that this is the case. (Note that in the case of a one-holed torus, no such pair exists, and in that case a similar argument works with a single arc, but we omit the details here. We are doing this to avoid handling the case where $\alpha$ is separating or non-separating for all other surfaces.) Now we consider the arc $a$ obtained by concatenating $a_1$ (so that it ends on $\alpha$), then following $\alpha$ exactly $k$ times around and then if necessary until the endpoint of $a_2$, and then following $a_2$. We now consider its image $\varphi(a)$ by the mapping class, which is an arc that intersects $\varphi(\alpha)$ exactly once. We can now compute the weight of $\varphi(\alpha)$ by cutting $\varphi(a)$ in its intersection point to obtain $a_1'$ and $a_2'$ and measuring the length of the arc $t'$ needed to concatenate to get $a$ (as in Figure \ref{fig:splittingarc}). The weight of $\varphi(\alpha)$ is
$$
\frac{t'}{\ell_X(\varphi(\alpha))}.
$$
We now need to show that this weight does not depend (too much) on the choice of $a_1$ and $a_2$.

The idea is similar to what happens in Lemma \ref{lem:curveadding2}. There were different choices for $a_1$ and $a_2$, but all resulting in arcs that are either disjoint, or possibly intersect at most once. Hence, as in the proof of Lemma \ref{lem:curveadding2}, the weights associated to any such choice differ by at most $\pm 2$. The difference here is that the arcs might intersect one time whereas in the proof of the lemma, they were necessarily disjoint. This shows that the action is (quasi-)well-defined.

Now to show that the mapping class group acts by quasi-isometries, we need to show that two vertices at distance $1$ are sent to vertices a (uniformly) bounded distance apart.
\begin{enumerate}[i)]
\item If two vertices $x,y$ differ by a flip, then $\varphi(x)$ and $\varphi(y)$ also differ (topologically) by a flip. Furthermore, the weights associated to each curve can be computed using the same arcs (up to a single flip), so by the above argument are comparable.
\item If two vertices $x,y$ differ by a curve addition, then the weights of a curve in $\varphi(x)$ and $\varphi(y)$ can be computed using arcs that are either the same or that intersect at most once. Hence, as above, the weights are always comparable.
\end{enumerate}
In all situations, the weights are uniformly comparable, and since the number of curves only depends on the curve complexity of the surface, this shows that if two vertices $x,y$ are distance $d$ apart, then $\varphi(x)$ and $\varphi(y)$ are distance at most $Cd$ apart where $C$ only depends on the topology of the surface.
\end{proof}

The quotient of $\hX$ by $\Mod(X)$ is exactly the quotient of $\hS$ by $\Mod(\Sigma)$, and hence cocompact. This is because any two hexagon decompositions that only differ by weights are related by Dehn twists along their curves, and so correspond to the same point in $\hS$.

By Theorem \ref{thm:valency}, $\hX$ has finite degree, and so the action of $\Mod(X)$ has finite stabilizers. Hence, the action of the mapping class group by quasi-isometries is proper and cocompact, and by Lemma \ref{lem:MS}, Theorem \ref{thm:qimcg} follows.

\end{document}